\DeclareMathOperator{\supp}{supp}
\DeclareMathOperator{\sign}{sign}
\DeclareMathOperator{\spann}{span}
\newcommand{\RR}{{\mathbb R}}
\newtheorem{thm}{Theorem}
\newtheorem{pro}[thm]{Proposition}
\newtheorem{lem}[thm]{Lemma}
\newtheorem{dfn}[thm]{Definition}
\newtheorem{exa}{Example}
\def\blfootnote{\xdef\@thefnmark{}\@footnotetext}
\begin{document}

\title{Elementary vectors and conformal sums \\ in polyhedral geometry
and their relevance \\ for metabolic pathway analysis}

\author{Stefan M\"uller$^*$ and Georg Regensburger}

\date{\today}

\maketitle

\begin{abstract}
\noindent
A fundamental result in metabolic pathway analysis states
that every flux mode can be decomposed into a sum of elementary modes.
However, only a decomposition without cancelations is biochemically meaningful,
since a reversible reaction cannot have different directions in the contributing elementary modes.
This essential requirement has been largely overlooked by the metabolic pathway community.

Indeed,
every flux mode can be decomposed into elementary modes without cancelations.
The result is an immediate consequence of a theorem by Rockafellar
which states that every element of a linear subspace is a conformal sum (a sum without cancelations)
of elementary vectors (support-minimal vectors).
In this work, we extend the theorem, first to ``subspace cones''
and then to general polyhedral cones and polyhedra.
Thereby, we refine Minkowski's and Cara\-th\'eo\-dory's theorems,
two fundamental results in polyhedral geometry.
We note that, in general, elementary vectors need not be support-minimal;
in fact, they are conformally non-decomposable and form a unique minimal set of conformal generators.

Our treatment is mathematically rigorous, but suitable for systems biologists,
since we give self-contained proofs for our results
and use concepts motivated by metabolic pathway analysis.
In particular, we study cones defined by linear subspaces and nonnegativity conditions -- like the flux cone --
and use them to analyze general polyhedral cones and polyhedra.

Finally,
we review applications of elementary vectors and conformal sums in metabolic pathway analysis.

\medskip
\noindent
{\bf Keywords:} Minkowski's theorem, Carath\'eodory's theorem,
s-cone, polyhedral cone, polyhedron,
conformal generators
\end{abstract}

\blfootnote{
\scriptsize

\noindent
{\bf S.\ M\"uller} (\href{mailto:stefan.mueller@ricam.oeaw.ac.at}{stefan.mueller@ricam.oeaw.ac.at}).  
Johann Radon Institute for Computational and Applied Mathematics,
Austrian Academy of Sciences,
Apostelgasse 23, 1030 Wien, Austria
\smallskip

\noindent
{\bf G.\ Regensburger} (\href{mailto:georg.regensburger@ricam.oeaw.ac.at}{georg.regensburger@ricam.oeaw.ac.at}).  
Johann Radon Institute for Computational and Applied Mathematics,
Austrian Academy of Sciences,
Altenbergerstra{\ss}e 69, 4040 Linz, Austria
\smallskip

\noindent
$^*$Corresponding author
}


\section{Introduction}

Cellular metabolism is the set of biochemical reactions
which transform nutrients from the environment into all the biomolecules a living cell consists of.
Most metabolic reactions are catalyzed by enzymes,
the expression and activity of which is controlled by gene and allosteric regulation, respectively.

A metabolic network together with enzymatic reaction rates gives rise to a nonlinear dynamical system
for the metabolite concentrations.
However,
for genome-scale networks,
quantitative knowledge of the underlying kinetics is not available,
and a mathematical analysis is not practicable.
Instead,
one considers only stoichiometric information
and studies the system of linear equalities and inequalities for the fluxes (net reaction rates),
arising from the pseudo steady-state assumption
and irreversibility constraints.

A metabolic network is given by $n$ internal metabolites, $r$ reactions,
and the corresponding stoichiometric matrix $N \in \RR^{n \times r}$,
which contains the net stoichiometric coefficients of each metabolite in each reaction.
The set of irreversible reactions is given by $\mathcal{I} \subseteq \{1,\ldots,r\}$.
One is interested in the {\em flux cone}
\[
C = \{ f \in \RR^r \mid N f = 0 \text{ and } f_i \ge 0 \text{ for } i \in \mathcal{I} \} ,
\]
which is a polyhedral cone defined by the null-space of the stoichiometric matrix and nonnegativity conditions.
Its elements are called {\em flux modes}.

As a running example,
we consider a small network, taken from~\cite{SchusterHilgetagWoodsFell2002},
the corresponding stoichiometric matrix,
and the resulting flux cone:

\bigskip
\hspace{-0.05\textwidth}
\begin{minipage}[c]{0.40\textwidth}
\xymatrix{
\ast \ar[r]^1 & X_1 \ar[r]^2 \ar@{<->}[d]^4 & X_2 \ar[r]^3 & \ast \\
& \ast
}
\end{minipage}
\hfill
\begin{minipage}[c]{0.55\textwidth}
\vspace{-2.ex}
\begin{align*}
N &=
\begin{pmatrix}
1 & -1 & 0 & -1 \\
0 & 1 & -1 & 0
\end{pmatrix}  , \\[1ex]
C &= \{ f \in \RR^4 \mid N f = 0 \text{ and } f_1,f_2,f_3 \ge 0 \} .
\end{align*}
\end{minipage}
\bigskip

\noindent
The network consists of two internal metabolites $X_1,X_2$ and four chemical reactions.
Reaction~1 imports $X_1$ from the environment (indicated by the symbol~$\ast$) which yields the first column $(1,0)^T$ of the stoichiometric matrix $N$.
Reaction~2 transforms $X_1$ into $X_2$ which gives the column $(-1,1)^T$, and reaction~3 exports $X_2$ which gives $(0,-1)^T$.
The first three reactions are assumed to be irreversible which yields the nonnegativity constraints $f_1,f_2,f_3 \ge 0$ in the definition of the flux cone $C$.
Finally, reaction~4 is reversible and exports/imports $X_1$.

Metabolic pathway analysis aims to identify biochemically/biologically/bio\-techno\-logically meaningful routes in a network,
in particular, the smallest routes.
Several definitions for minimal metabolic pathways have been given in the literature,
with {\em elementary modes} (EMs) being the fundamental concept
both biologically and mathematically~\cite{KlamtStelling2003,LlanerasPico2010}.
Formally,
EMs are defined as support-minimal
(or, equivalently, support-wise non-decomposable)
flux modes~\cite{SchusterHilgetagWoodsFell2002,SchusterHilgetag1994}.
Clearly, a positive multiple of an EM is also an EM
since it fulfills the steady-state condition and the irreversibility constraints.

In the example, the EMs are given by $e^1=(1,0,0,1)^T$, $e^2=(0,1,1,-1)^T$, $e^3=(1,1,1,0)^T$, and their positive multiples.
It is easy to check that $e^1$, $e^2$, and $e^3$ are flux modes (elements of the flux cone) and support-minimal.
Note that $e^3=e^1+e^2$.

A fundamental result in metabolic pathway analysis states
that every flux mode can be decomposed into a sum of EMs~\cite{SchusterHilgetagWoodsFell2002}.
However,
only a decomposition without cancelations is biochemically meaningful,
since a reversible reaction cannot have different directions in the contributing EMs.
This essential requirement has been largely overlooked by the metabolic pathway community.
Indeed,
as we will show in this work,
every flux mode can be decomposed into EMs {\em without cancelations},
that is,
\begin{itemize}
\item[$(0)$] if a component of the flux mode is zero,
then this component is zero in the contributing EMs,
\item[$(+)$] if a component of the flux mode is positive,
then this component is positive or zero in the contributing EMs,
\item[$(-)$] if a component of the flux mode is negative,
then this component is negative or zero in the contributing EMs.
\end{itemize}
In mathematical terms,
every nonzero element of a "subspace cone" (defined by a linear subspace and nonnegativity conditions) is a conformal sum of elementary vectors,
cf.\ Theorem~\ref{thm}.
The result is stated in~\cite{UrbanczikWagner2005, Urbanczik2007};
part $(0)$ has been shown in~\cite{SchusterHilgetagWoodsFell2002}
and guarantees a decomposition without cancelations in a weaker sense~\cite{LlanerasPico2010,Zanghellini2013}.

In the example, the flux mode $f = (2,1,1,1)^T$ can be decomposed into EMs in two ways:
\begin{align*}
f =
\begin{pmatrix}
2 \\ 1 \\ 1 \\ 1
\end{pmatrix}
&= 2 \, e^1 + e^2 =
\begin{pmatrix}
2 \\ 0 \\ 0 \\ 2
\end{pmatrix}
+
\begin{pmatrix}
0 \\ 1 \\ 1 \\ -1
\end{pmatrix}
\\
&= e^1 + e^3 =
\begin{pmatrix}
1 \\ 0 \\ 0 \\ 1
\end{pmatrix}
+
\begin{pmatrix}
1 \\ 1 \\ 1 \\ 0
\end{pmatrix} .
\end{align*}
The first sum involves a cancelation in the last component of the flux.
The last reaction is reversible, however, it cannot have a net rate in different directions at the same time.
Hence, only the second sum is biochemically meaningful.
As stated above, a decomposition without cancelations is always possible.

In convex analysis,
elementary vectors of a linear subspace were introduced as support-minimal vectors by Rockafellar in 1969.
He proves that every vector is a conformal sum (originally called harmonious superposition)
of elementary vectors~\cite[Theorem~1]{Rockafellar1969}.
For proofs and generalizations in the settings of polyhedral geometry and oriented matroids,
see \cite[Lemma 6.7]{Ziegler1995} and \cite[Theorem 5.36]{BachemKern1992}.
Rockafellar points out that this result is easily shown to be equivalent to Minkowski's theorem~\cite{Minkowski1896}
for pointed polyhedral cones,
stating that every nonzero vector is a nonnegative linear combination of extreme vectors. 
Moreover, the result immediately implies Carath\'eodory's theorem \cite{Caratheodory1911},
stating that the number of extreme vectors in such a nonnegative linear combination
need not exceed the dimension of the cone.
In fact, Rocka\-fellar writes:
``This is even a convenient route for attaining various important facts about polyhedral convex cones,
since the direct proof 
[...]\ for Theorem 1 is so elementary.''

In metabolic pathway analysis,
decompositions without cancelations 
were introduced by Urbanczik and Wagner~\cite{UrbanczikWagner2005}.
The corresponding elementary vectors are defined by intersecting a polyhedral cone with all closed orthants of maximal dimension.
By applying Minkowski's theorem for pointed polyhedral cones,
every vector is a sum of extreme vectors without cancelations.
Urbanczik further extended this approach to polyhedra arising from flux cones and inhomogeneous constraints~\cite{Urbanczik2007}.

In polyhedral geometry,
it seems that conformal decompositions of general cones and polyhedra have not yet been studied.
In this work, following Rocka\-fellar,
we first extend his result to cones defined by linear subspaces and nonnegativity conditions (Theorem~\ref{thm}).
For subspace cones, support-minimality is equivalent to conformal non-decomposability.
As it turns out, for general polyhedral cones, elementary vectors have to be defined as conformally non-decomposable vectors.
However, these are in one-to-one correspondence with elementary vectors of a higher-dimensional subspace cone,
and, by our result for subspace cones,
we obtain a conformal refinement of Minkowski's and Carath\'eodory's theorems
for polyhedral cones (Theorem~\ref{thm:polcon}).
In particular, there is an upper bound on the number of elementary vectors
needed in a conformal decomposition of a vector.
Finally, by taking into account vertices and conformal convex combinations,
we further extend our result to polyhedra (Theorem~\ref{thm:pol}).
We note that elementary vectors do not form a minimal generating set (of an s-cone, a general polyhedral cone, or a polyhedron).
However, they form a unique minimal set of {\em conformal} generators (Proposition~\ref{pro:min}).


\section{Definitions}

We denote the nonnegative real numbers by $\RR_\ge$.
For $x \in \RR^n$, we write $x \ge 0$ if $x \in \RR^n_\ge$.
Further, we denote the {\em support} of a vector $x \in \RR^n$ by 
$\supp(x) = \{ i \mid x_i \neq 0 \}$.

\subsection*{Sign vectors}

For $x \in \RR^n$, we define the {\em sign vector} $\sign(x) \in \{-,0,+\}^n$ 
by applying the sign function component-wise, that is,
$\sign(x)_i=\sign(x_i)$ for $i=1,\ldots,n$. 
The relations $0<-$ and $0<+$ induce a partial order on $\{-,0,+\}^n$:
for $X, Y \in \{-,0,+\}^n$,
we write $X \le Y$
if the inequality holds component-wise.
For $x,y \in \RR^n$,
we say that $x$ {\em conforms to} $y$, if $\sign(x) \le \sign(y)$.
For example,
let $x=(-1,0,2)^T$ and $y=(-2,-1,1)$. Then,
\[
\sign \!
\begin{pmatrix}
-1 \\ 0 \\ 2
\end{pmatrix}
=
\begin{pmatrix}
- \\ 0 \\ +
\end{pmatrix}
\le
\begin{pmatrix}
- \\ - \\ +
\end{pmatrix}
=
\sign \!
\begin{pmatrix}
-2 \\ -1 \\ 1
\end{pmatrix} ,
\]
that is, $\sign(x) \le \sign(y)$, and $x$ conforms to $y$.
Let $X \in \{-,0,+\}^n$. The corresponding closed orthant $O \subset \RR^n$ is defined as $O = \{ x \mid \sign(x) \le X \}$.

\subsection*{Convex cones}

A nonempty subset $C$ of a vector space is a {\em convex cone}, if
\[
x, y \in C \text{ and } \mu, \nu > 0 \text{ imply } \mu x + \nu y \in C,
\]
or, equivalently, if
\[
\lambda C = C \text{ for all } \lambda > 0 \text{ and } C+C=C.
\]
A convex cone $C$ is called {\em pointed} if $C \cap -C= \{0\}$.
It is {\em polyhedral} if
\[
C = \{ x \mid A x \ge 0 \} \quad \text{for some } A \in \RR^{m \times r} ,
\]
that is, if it is defined by finitely many homogeneous inequalities.
Hence, a polyhedral cone is pointed if and only if $\ker(A) = \{0\}$.

\subsection*{Special vectors}

We recall the definitions of support-minimal vectors and extreme vectors,
which play an important role in both polyhedral geometry and metabolic pathway analysis.
We also introduce support-wise non-decomposable vectors,
which serve as elementary modes for flux cones (in the original definition),
and conformally non-decomposable vectors,
which serve as elementary vectors for general polyhedral cones (see Subsection~\ref{sec:genpolcon}).

Let $C$ be a convex cone.
A nonzero vector $x \in C$ is called
\begin{itemize}
\item
\emph{support-minimal}, if
\begin{alignat}{1} \label{smv}
& \text{for all nonzero } x' \in C, \nonumber \\
& \supp(x') \subseteq \supp(x) \text{ implies } \supp(x') = \supp(x), \tag{SM}
\end{alignat}
\item
\emph{support-wise non-decomposable}, if
\begin{alignat}{1} \label{sndv}
& \text{for all nonzero } x^1, x^2 \in C \text{ with } \supp(x^1), \, \supp(x^2) \subseteq \supp(x), \nonumber \\
& x = x^1 + x^2 \text{ implies } \supp(x^1) = \supp(x^2), \tag{swND}
\end{alignat}
\item
\emph{conformally non-decomposable}, if
\begin{alignat}{1} \label{cndv}
& \text{for all nonzero } x^1, x^2 \in C \text{ with } \sign(x^1), \, \sign(x^2) \le \sign(x), \nonumber \\
& x = x^1 + x^2 \text{ implies } x^1 = \lambda x^2 \text{ with } \lambda > 0, \tag{cND}
\end{alignat}
\item
and \emph{extreme}, if
\begin{alignat}{1} \label{xv}
& \text{for all nonzero } x^1, x^2 \in C, \nonumber \\
& x = x^1 + x^2 \text{ implies } x^1 = \lambda x^2 \text{ with } \lambda > 0. \tag{EX}
\end{alignat}
\end{itemize}

From the definitions, we have the implications
\[
\text{SM} \Rightarrow \text{swND} \Leftarrow \text{EX} \Rightarrow \text{cND}.
\]

If $x \in C$ is extreme, then $\{ \lambda x \mid \lambda > 0 \}$ is called an extreme ray of $C$.
In fact, $C$ has an extreme ray if and only if $C$ is pointed.
If $C$ is contained in a closed orthant (and hence pointed),
we have the equivalence $\text{cND} \Leftrightarrow \text{EX}$.


\section{Mathematical results}

We start by extending a result on conformal decompositions into elementary vectors
from linear subspaces to special cases of polyhedral cones,
including flux cones in metabolic pathway analysis.

\subsection{Linear subspaces and s-cones}

\newcommand{\xym}{\left( \begin{smallmatrix} x \\ y \end{smallmatrix} \right)}

We consider linear subspaces with optional nonnegativity constraints as special cases of polyhedral cones.
Let $S \subseteq \RR^r$ be a linear subspace and $0 \le d \le r$.
We define the resulting s-cone (subspace cone, special cone) as
\[
C(S,d) = \{ \xym \in \RR^{(r-d)+d} \mid \xym \in S, \, y \ge 0 \}.
\]
Clearly, $C(S,0) = S$ and $C(S,r) = S \cap \RR^r_\ge$.

\begin{dfn}
Let $C(S,d)$ be an s-cone.
A vector $e \in C(S,d)$ is called {\em elementary} if it is support-minimal.
\end{dfn}

For linear subspaces, the definition of elementary vectors (EVs) as SM vectors was given in \cite{Rockafellar1969}.
For flux cones, where $S = \ker(N)$,
the definition of elementary modes (EMs) as SM vectors was given in \cite{SchusterHilgetagWoodsFell2002}.
Interestingly, the choice of the same adjective for the closely related concepts
of elementary vectors and elementary modes was coincidental~\cite{Schuster}.

In the proofs of Theorem~\ref{thm} and Propositions~\ref{pro:fin} and~\ref{pro:equ},
we use the following argument.

\begin{lem} \label{lem:arg}
Let $C(S,d)$ be an s-cone
and $x,x' \in C(S,d)$ be nonzero vectors which are not proportional.
If $\supp(x') \subseteq \supp(x)$,
then there exists a nonzero vector
\[
x'' = x - \lambda x' \in C(S,d) \quad \text{with } \lambda \in \RR
\]
such that
\[
\sign(x'') \le \sign(x) \quad \text{and} \quad \supp(x'') \subset \supp(x).
\]
If $\sign(x') \le \sign(x)$, then $\lambda > 0$ in $x''$.
\end{lem}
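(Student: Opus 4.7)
The plan is to build $x''$ by subtracting a multiple of $x'$ from $x$ whose scalar $\lambda$ is chosen as $R_j := x_j/x_j'$ for some index $j \in \supp(x')$. For such a choice the $j$th component vanishes, so the support strictly shrinks; and since $x,x'$ are not proportional the resulting vector cannot be zero. The remaining task is to make this choice so that no ``new'' signs appear, i.e.\ $\sign(x'')\le\sign(x)$. Membership in $C(S,d)$ is then automatic: $x''=x-\lambda x'$ lies in the linear subspace $S$, and for any $y$-coordinate $i$ (where $x_i\ge 0$ by assumption) the sign condition forces $x_i''\ge 0$.

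To carry out the choice, I would partition $\supp(x')\subseteq\supp(x)$ into $J_+ = \{\, j \mid \sign(x_j')=\sign(x_j)\,\}$ (so $R_j>0$) and $J_- = \{\, j \mid \sign(x_j')=-\sign(x_j)\,\}$ (so $R_j<0$). A short sign check shows that a real $\lambda$ satisfies $\sign(x-\lambda x')\le\sign(x)$ if and only if
\[
\lambda_-:=\max_{j\in J_-} R_j \;\le\; \lambda \;\le\; \min_{j\in J_+} R_j=:\lambda_+,
\]
with the convention that a missing bound (when $J_\pm=\emptyset$) is to be ignored. Because $\supp(x')\neq\emptyset$ at least one of the bounds is present; when both are present $\lambda_-<0<\lambda_+$, so the feasible interval is always nonempty. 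Taking $\lambda$ equal to whichever bound exists (say $\lambda_+$ if $J_+\neq\emptyset$, otherwise $\lambda_-$) yields the desired $x''$ and forces $x_j''=0$ for the index $j$ attaining the extremum, so $\supp(x'')\subsetneq\supp(x)$.

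For the last assertion, if $\sign(x')\le\sign(x)$ then every $j\in\supp(x')$ has $\sign(x_j')=\sign(x_j)\neq 0$, hence $J_-=\emptyset$. Only the upper bound is present, and the chosen $\lambda=\lambda_+$ is strictly positive. The one delicate step in the whole argument is the sign bookkeeping behind the equivalence of the sign-preservation condition with the interval $[\lambda_-,\lambda_+]$: the inequality $\sign(x_j-\lambda x_j')\le\sign(x_j)$ flips direction according to the sign of $x_j'$, which is precisely what the split into $J_+$ and $J_-$ is designed to absorb into a uniform two-sided bound on~$\lambda$.
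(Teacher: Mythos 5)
Your proof is correct and follows essentially the same route as the paper's: the paper also takes $x''=x-\lambda x'$ with $\lambda$ chosen as the largest positive (or, when no positive bound exists, the smallest negative) value for which $\sign(x'')\le\sign(x)$ still holds, at which point a component of the support must vanish. Your explicit bookkeeping with the ratios $R_j$ and the interval $[\lambda_-,\lambda_+]$ simply spells out why that extremal $\lambda$ exists and why it kills a coordinate, which the paper leaves implicit.
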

\begin{proof}
Clearly, $x''=x-\lambda x'$ is nonzero for all $\lambda \in \RR$.
There exists a largest~$\lambda > 0$ (in case $\sign(-x') \le \sign(x)$ a smallest~$\lambda < 0$)
such that $\sign(x'') \le \sign(x)$.
For this~$\lambda$, $x'' \in C(S,d)$ and $\supp(x'') \subset \supp(x)$.
\end{proof}

For linear subspaces, the following fundamental result was proved in \cite[Theorem~1]{Rockafellar1969}.
We extend it to s-cones.
\begin{thm} \label{thm}
Let $C(S,d)$ be an s-cone.
Every nonzero vector $x \in C(S,d)$ is a conformal sum of EVs.
That is,
there exists a finite set $E \subseteq C(S,d)$ of EVs such that
\[
x = \sum_{e \in E} e \quad \text{with } \sign(e) \le \sign(x).
\]
The set $E$ can be chosen such that its elements are linearly independent,
in particular,
they can be ordered such that every $e \in E$ has a component which is nonzero in $e$,
but zero in its predecessors (in the ordered set).
Then, $|E| \le \dim(S)$ and $|E| \le |\supp(x)|$.
\end{thm}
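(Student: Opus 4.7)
The plan is induction on $|\supp(x)|$, with Lemma~\ref{lem:arg} as the workhorse. The base case is when $x$ is itself support-minimal in $C(S,d)$: then $E=\{x\}$ works, $x$ trivially conforms to itself, and the bounds are immediate.

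For the inductive step, I first want to extract a single EV conforming to $x$. I would start from $x^{(0)}=x$ and iterate as follows: if $x^{(k)}$ is not support-minimal in $C(S,d)$, pick any nonzero $y\in C(S,d)$ with $\supp(y)\subsetneq\supp(x^{(k)})$; then $x^{(k)}$ and $y$ are not proportional, so Lemma~\ref{lem:arg} produces $x^{(k+1)}=x^{(k)}-\lambda y\in C(S,d)$ with $\sign(x^{(k+1)})\le\sign(x^{(k)})$ and $\supp(x^{(k+1)})\subsetneq\supp(x^{(k)})$. Since supports strictly decrease in a finite set, after at most $|\supp(x)|$ iterations I reach a support-minimal $e\in C(S,d)$ which, by transitivity of $\le$ on sign vectors, still satisfies $\sign(e)\le\sign(x)$. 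A small point that should be spelled out is why this $e$ is genuinely SM in $C(S,d)$ and not merely SM among conforming vectors: any hypothetical $f\in C(S,d)$ with $\supp(f)\subsetneq\supp(e)$ feeds back into Lemma~\ref{lem:arg} applied to $e$ and $f$ and yields a conforming vector of still smaller support, contradicting minimality.

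Having produced the EV $e$ with $\sign(e)\le\sign(x)$, I apply Lemma~\ref{lem:arg} once more to $x$ and $e$. The ``if'' clause of the lemma gives $\lambda>0$, so $e_1:=\lambda e$ is again an EV (positive multiples preserve SM) conforming to $x$, and the remainder $\tilde{x}:=x-e_1\in C(S,d)$ satisfies $\sign(\tilde{x})\le\sign(x)$ and $\supp(\tilde{x})\subsetneq\supp(x)$. If $\tilde{x}=0$, then $E=\{e_1\}$ works. Otherwise the induction hypothesis applied to $\tilde{x}$ provides a finite set $\{e_2,\ldots,e_k\}$ of EVs conforming to $\tilde{x}$ (hence, by transitivity, to $x$) with $\tilde{x}=e_2+\cdots+e_k$, so $E=\{e_1,\ldots,e_k\}$ is the desired conformal decomposition of $x$.

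For the linear-independence and ordering statement I would order the inductive $e_2,\ldots,e_k$ as supplied by the induction and place $e_1$ \emph{last}. Since $\supp(\tilde{x})\subsetneq\supp(x)$, there is an index $i\in\supp(x)\setminus\supp(\tilde{x})$; at this index $e_{1,i}=x_i\ne 0$ while $e_{j,i}=0$ for all $j\ge 2$ (because they all conform to $\tilde{x}$). So $e_1$ has a component nonzero in itself and zero in all of its predecessors $e_2,\ldots,e_k$, and the inductive ordering on $e_2,\ldots,e_k$ is preserved. This ``new nonzero coordinate'' property forces linear independence by a standard triangular argument; the bound $|E|\le\dim S$ follows from $E\subseteq S$, and $|E|\le|\supp(x)|$ follows because each $e\in E$ contributes a distinct index of $\supp(x)$. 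The main obstacle, and the only subtle step, is the extraction of $e$ in the second paragraph together with the verification that minimality among conforming vectors upgrades to minimality in $C(S,d)$; everything else is a clean induction driven by the lemma.
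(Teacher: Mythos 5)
Your proof is correct and follows essentially the same route as the paper: induction on $|\supp(x)|$ driven by Lemma~\ref{lem:arg}, subtracting a positive multiple of a conforming support-minimal vector from $x$ and placing it last in the ordering via its fresh nonzero coordinate. The only cosmetic difference is that you extract that support-minimal vector by iterating the lemma directly until the support stabilizes, whereas the paper obtains it by applying the induction hypothesis to the reduced vector $x''$ and picking any elementary vector from its decomposition; the two devices are interchangeable.
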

\begin{proof}
We proceed by induction on the cardinality of $\supp(x)$. \par
Either, $x$ is SM (and $E = \{x\}$)
or there exists a nonzero vector $x' \in C(S,d)$
with $\supp(x') \subset \supp(x)$, but not necessarily with $\sign(x') \le \sign(x)$.
However, by Lemma~\ref{lem:arg}, 
there exists a nonzero vector $x'' \in C(S,d)$ with $\sign(x'') \le \sign(x)$ and $\supp(x'') \subset \supp(x)$.
By the induction hypothesis,
there exists a SM vector $e^*$ with $\sign(e^*) \le \sign(x'')$ and hence $\sign(e^*) \le \sign(x)$.
By Lemma~\ref{lem:arg} again, 
there exists a nonzero vector
\[
x^* = x - \lambda e^* \in C(S,d) \quad \text{with } \lambda>0
\]
such that  $\sign(x^*) \le \sign(x)$ and $\supp(x^*) \subset \supp(x)$.
By the induction hypothesis,
there exists a finite set $E^*$ of SM vectors such that
\[
x^* = \sum_{e \in E^*} e \quad \text{with } \sign(e) \le \sign(x^*)
\]
and hence $\sign(e) \le \sign(x)$.
We have constructed a finite set $E = E^* \cup \{\lambda e^*\}$ of SM vectors such that
\[
x = x^* + \lambda e^* = \sum_{e \in E^*} e + \lambda e^* = \sum_{e \in E} e \quad \text{with } \sign(e) \le \sign(x) .
\]
By the induction hypothesis,
the set $E^*$ can be chosen such that its elements are linearly independent
and ordered such that every $e \in E^*$ has a component which is nonzero in $e$, but zero in all its predecessors.
By construction, $\lambda e^*$ has a component which is nonzero, but zero in $x^*$ and hence in all $e \in E^*$.
Obviously, the elements of $E=E^* \cup \{\lambda e^*\}$ are linearly independent and can be ordered accordingly.
\end{proof}

\renewcommand{\thefootnote}{\fnsymbol{footnote}}

The statement about the support of the EVs was too strong in \cite[Theorem~1]{Rockafellar1969}.
It was claimed that every EV has a nonzero component which is zero in all other EVs.\footnote{
For a counterexample, consider the subspace $S = \ker(1,-1,-1,1) \subseteq \RR^4$.
Its nonnegative EVs are
\[
e^1 = \begin{pmatrix} 1 \\ 1 \\ 0 \\ 0 \end{pmatrix}, \,
e^2 = \begin{pmatrix} 1 \\ 0 \\ 1 \\ 0 \end{pmatrix}, \,
e^3 = \begin{pmatrix} 0 \\ 1 \\ 0 \\ 1 \end{pmatrix}, \,
e^4 = \begin{pmatrix} 0 \\ 0 \\ 1 \\ 1 \end{pmatrix},
\]
and their positive multiples.
Then $x=(1,2,3,4)^T$ is not a conformal sum of EVs with the claimed property.
(Every conformal decomposition of $x$ consists of at least 3 EVs,
and every set of 3 EVs contains 1 EV which does not have a nonzero component which is zero in the other EVs.)
}

Theorem~\ref{thm} is a conformal refinement of Minkowski's and Carath\'eodory's theorems for s-cones.
In fact, it remains to show that there are finitely many EVs.

\begin{pro} \label{pro:fin}
Let $C(S,d)$ be an s-cone.
If two SM vectors $x,x' \in C(S,d)$ have the same sign vector, $\sign(x) = \sign(x')$, then $x = \lambda x'$ with $\lambda > 0$.
As a consequence, there are finitely many SM vectors up to positive scalar multiples.
\end{pro}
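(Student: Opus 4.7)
The plan is to prove the first assertion by contradiction using Lemma~\ref{lem:arg}, and then to deduce finiteness from the fact that there are only finitely many possible sign vectors in $\{-,0,+\}^r$.

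Suppose $x, x' \in C(S,d)$ are both SM with $\sign(x) = \sign(x')$ but $x$ and $x'$ are not proportional. Since equal sign vectors imply equal supports, we have $\supp(x') = \supp(x) \subseteq \supp(x)$, and moreover $\sign(x') \le \sign(x)$. Lemma~\ref{lem:arg} then yields a nonzero vector $x'' = x - \lambda x' \in C(S,d)$ with $\lambda > 0$ satisfying $\sign(x'') \le \sign(x)$ and, crucially, $\supp(x'') \subset \supp(x)$ (strict). This strictly smaller support contradicts the support-minimality of $x$. Hence $x$ and $x'$ must be proportional, and since they share the same sign vector the proportionality constant is positive.

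For the finiteness statement, I would argue that the map sending an SM vector $x$ to its sign vector $\sign(x) \in \{-,0,+\}^r$ descends to an injection from the set of SM vectors modulo positive scalar multiples into $\{-,0,+\}^r$ by the first part of the proposition. Since $\{-,0,+\}^r$ has only $3^r$ elements, the set of SM vectors up to positive scaling is finite.

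I do not expect any real obstacle: the first part is essentially an immediate application of Lemma~\ref{lem:arg} once one notices that equality of sign vectors gives both the support inclusion and the conformality hypothesis needed to extract $\lambda > 0$, and the second part is a counting argument. The only subtlety to keep track of is that Lemma~\ref{lem:arg}'s conclusion $x'' \neq 0$ relies on $x$ and $x'$ being non-proportional, which is exactly our contradiction hypothesis.
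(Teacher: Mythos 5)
Your proof is correct and follows essentially the same route as the paper: a contradiction via Lemma~\ref{lem:arg}, which produces a nonzero vector with strictly smaller support, contradicting support-minimality, followed by the observation that there are only finitely many sign vectors. You simply spell out more of the details (the positivity of the proportionality constant and the counting argument) than the paper's very terse version.
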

\begin{proof}
Assume there are two SM vectors with the same sign vector which are not proportional.
Then, by Lemma~\ref{lem:arg}, there exists a vector with smaller support.
\end{proof}

We conclude by showing that, for s-cones,
EVs can be equivalently defined as SM, swND, or cND vectors.

\begin{pro} \label{pro:equ}
For an s-cone, support-minimality, support-wise non-decompos\-ability, and conformal non-decomposability are equivalent.
That is,
\[
\text{s-cone} \colon \quad \text{SM} \Leftrightarrow \text{swND} \Leftrightarrow \text{cND}.
\]
\end{pro}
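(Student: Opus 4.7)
The plan is to use Theorem~\ref{thm}, Lemma~\ref{lem:arg}, and Proposition~\ref{pro:fin} as the main tools. Since $\text{SM} \Rightarrow \text{swND}$ is already recorded as immediate from the definitions, it remains to prove $\text{swND} \Rightarrow \text{SM}$, $\text{SM} \Rightarrow \text{cND}$, and $\text{cND} \Rightarrow \text{SM}$. The first will use Lemma~\ref{lem:arg} twice, while the other two will use the linearly independent conformal decomposition from Theorem~\ref{thm}. The step I expect to be the main obstacle is $\text{SM} \Rightarrow \text{cND}$: the conforming summands $x^1, x^2$ need not themselves be elementary, so one must refine each via Theorem~\ref{thm} and then pass support-minimality of $x$ down to every refined piece in order to invoke Proposition~\ref{pro:fin}.

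For $\text{swND} \Rightarrow \text{SM}$ I argue the contrapositive. If $x$ is not SM, choose $x' \in C(S,d) \setminus \{0\}$ with $\supp(x') \subsetneq \supp(x)$. Lemma~\ref{lem:arg} produces a nonzero $x'' \in C(S,d)$ with $\sign(x'') \le \sign(x)$ and $\supp(x'') \subsetneq \supp(x)$. Applying Lemma~\ref{lem:arg} a second time, now with $x''$ in place of $x'$ so that the sign condition $\sign(x'') \le \sign(x)$ forces $\lambda > 0$, yields a nonzero $x''' = x - \lambda x'' \in C(S,d)$ with $\supp(x''') \subsetneq \supp(x)$. The decomposition $x = x''' + \lambda x''$ then has two nonzero summands in $C(S,d)$ with supports strictly inside $\supp(x)$; if swND held, both supports would coincide with $\supp(x'')$, and then $\supp(x) \subseteq \supp(x''') \cup \supp(\lambda x'') = \supp(x'') \subsetneq \supp(x)$, a contradiction.

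For $\text{SM} \Rightarrow \text{cND}$: given SM $x$ and a decomposition $x = x^1 + x^2$ with nonzero $x^i \in C(S,d)$ conforming to $x$, I refine each $x^i$ into a conformal sum of elementary vectors via Theorem~\ref{thm}. Every elementary $e$ that appears conforms to $x$, so $\supp(e) \subseteq \supp(x)$; support-minimality of $x$ forces $\supp(e) = \supp(x)$, and conformity then gives $\sign(e) = \sign(x)$. Proposition~\ref{pro:fin} makes $e$ a positive multiple of $x$, so each $x^i$ is a positive multiple of $x$, and hence $x^1, x^2$ are positive multiples of each other. For $\text{cND} \Rightarrow \text{SM}$ (contrapositive): if $x$ is not SM, Theorem~\ref{thm} yields $x = \sum_{e \in E} e$ with $E$ linearly independent and $|E| \ge 2$ (otherwise $x$ is itself an elementary vector); the split $x = e^1 + \sum_{i \ge 2} e^i$ gives two nonzero conforming summands in $C(S,d)$, and cND would force them to be proportional, contradicting the linear independence of $E$.
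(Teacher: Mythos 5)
Your proposal is correct, but it takes a genuinely different route from the paper. The paper proves the single cycle $\text{SM} \Rightarrow \text{swND} \Rightarrow \text{cND} \Rightarrow \text{SM}$ using only Lemma~\ref{lem:arg} and two short perturbation constructions: for $\text{swND} \Rightarrow \text{cND}$ it peels $\lambda x^1$ off a conformal decomposition to get a summand of strictly smaller support, and for $\text{cND} \Rightarrow \text{SM}$ it splits $x = (\tfrac{1}{2}x + \lambda x') + (\tfrac{1}{2}x - \lambda x')$ with $\lambda$ maximal subject to conformity. You instead establish $\text{SM} \Leftrightarrow \text{swND}$ and $\text{SM} \Leftrightarrow \text{cND}$ separately, importing the heavier machinery of Theorem~\ref{thm} (in particular the linear-independence refinement of the conformal decomposition) and Proposition~\ref{pro:fin}; this is legitimate, since both are proved beforehand without reference to Proposition~\ref{pro:equ}, and each of your steps checks out -- the double application of Lemma~\ref{lem:arg} for $\text{swND} \Rightarrow \text{SM}$, the reduction of every refined elementary piece to a positive multiple of $x$ via $\supp(e) = \supp(x)$, $\sign(e)=\sign(x)$, and Proposition~\ref{pro:fin} for $\text{SM} \Rightarrow \text{cND}$, and the contradiction with linear independence for $\text{cND} \Rightarrow \text{SM}$. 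What the paper's approach buys is economy and self-containedness (only the one-line Lemma~\ref{lem:arg} is needed, and the proposition can be read independently of the decomposition theorem); what your approach buys is a nice illustration that the equivalence is a formal consequence of the decomposition theorem itself, with the linear-independence clause of Theorem~\ref{thm} doing the work that the paper's explicit $\tfrac{1}{2}x \pm \lambda x'$ construction does. A small simplification is available in your $\text{SM} \Rightarrow \text{cND}$ step: since $x^1$ conforms to $x$ and $x$ is SM, $\sign(x^1)=\sign(x)$, and if $x^1$ were not proportional to $x$ then Lemma~\ref{lem:arg} would already produce a nonzero conforming vector of strictly smaller support, contradicting SM -- so the detour through the elementary refinement of $x^1$ can be avoided.
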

\begin{proof}
$\text{SM} \Rightarrow \text{swND}$:
By definition. \par
$\text{swND} \Rightarrow \text{cND}$:
Let $C(S,d)$ be an s-cone
and assume that $x \in C(S,d)$ is conformally decomposable, that is,
$x=x^1+x^2$ with nonzero $x^1, x^2 \in C(S,d)$, $\sign(x^1), \sign(x^2) \le \sign(x)$, and $x^1, x^2$ being not proportional.
By Lemma~\ref{lem:arg}, 
there exists a nonzero $x'=x - \lambda x^1 \in C(S,d)$ 
such that $\supp(x') \subset \supp(x)$. Hence $\supp(x') \neq \supp(x^1)$,
and $x = x' + \lambda x^1$ is support-wise decomposable.

$\text{cND} \Rightarrow \text{SM}$:
Let $C(S,d)$ be an s-cone
and assume that $x \in C(S,d)$ is not SM,
that is, there exists a nonzero $x' \in C(S,d)$ with $\supp(x') \subset \supp(x)$.
Then, there exists a largest $\lambda > 0$ such that
$x^1 = \frac{1}{2} x + \lambda x'$ and $x^2 = \frac{1}{2} x - \lambda x'$ fulfill $\sign(x^1), \, \sign(x^2) \le \sign(x)$.
For this $\lambda$,
either $\supp(x^1) \subset \supp(x)$ or $\supp(x^2) \subset \supp(x)$;
in any case, $x^1, x^2 \in C(S,d)$ and $\supp(x^1) \neq \supp (x^2)$.
Hence, $x = x^1+x^2$ is conformally decomposable.
\end{proof}

If an s-cone is contained in a closed orthant, then further $\text{cND} \Leftrightarrow \text{EX}$,
and all definitions of special vectors are equivalent.


\newcommand{\xAx}[1]{\left( \begin{smallmatrix} x^{#1} \\ Ax^{#1} \end{smallmatrix} \right)}
\newcommand{\eAe}[1]{\left( \begin{smallmatrix} e^{#1} \\ Ae^{#1} \end{smallmatrix} \right)}


\subsection{General polyhedral cones} \label{sec:genpolcon}

Let $C$ be a polyhedral cone, that is,
\[
C = \{ x \in \RR^r \mid A x \ge 0 \} \quad \text{for some } A \in \RR^{m \times r}.
\]

For s-cones,
we defined elementary vectors (EVs) via support-minimality
which, in this case, turned out to be equivalent to conformal non-decomposability.
For general polyhedral cones,
only the latter concept allows to extend Theorem~\ref{thm}.

\begin{dfn}
Let $C$ be a polyhedral cone.
A vector $e \in C$ is called {\em elementary} if it is conformally non-decomposable.
\end{dfn}

In order to apply Theorem~\ref{thm},
we define an s-cone related to a polyhedral cone $C$.
We introduce the subspace
\[
\tilde S = \{ \xAx{} \in \RR^{r+m} \mid x \in \spann(C) \}
\]
with $\dim(\tilde S) = \dim(C)$
and the s-cone
\begin{align*}
\tilde C &= C(\tilde S, m) \\
&= \{ \xAx{} \in \RR^{r+m} \mid x \in \spann(C) \text{ and } Ax \ge 0 \} \\
&= \{ \xAx{} \in \RR^{r+m} \mid x \in C \} .
\end{align*}
Hence,
\[
x \in C
\quad \Leftrightarrow \quad
\xAx{} \in \tilde C.
\]
Moreover,
the cND vectors of $C$ and $\tilde C$ are in one-to-one correspondence.

\begin{lem} \label{lem:121}
Let $C = \{ x \mid A x \ge 0 \}$ be a polyhedral cone and $\tilde C = \{ \xAx{} \mid Ax \ge 0 \}$ the related s-cone.
Then,
\[
x \in C \text{ is cND}
\quad \Leftrightarrow \quad
\xAx{} \in \tilde C \text{ is cND}.
\]
\end{lem}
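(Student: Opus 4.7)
The plan is to exploit the bijection $\varphi\colon C \to \tilde C$, $x \mapsto \binom{x}{Ax}$, which is linear and in particular preserves sums, nonzero elements, and positive scalar multiples. Once we show that the sign-comparison $\sign(\cdot)\le\sign(\cdot)$ is transported in both directions by $\varphi$, conformal non-decomposability transfers immediately from one side to the other.

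First I would observe the key sign-transport fact: for nonzero $x,x'\in C$,
\[
\sign(x')\le\sign(x) \quad \Longleftrightarrow \quad \sign\!\tbinom{x'}{Ax'}\le\sign\!\tbinom{x}{Ax}.
\]
The implication ``$\Leftarrow$'' is trivial by reading off the first $r$ coordinates. For ``$\Rightarrow$'', the first $r$ coordinates are fine by assumption, and in the last $m$ coordinates both $Ax'$ and $Ax$ are nonnegative; so I only need to rule out the case $(Ax)_j=0$, $(Ax')_j>0$. Actually this direction as stated is not needed in the proof; what is needed is the transport of decompositions, handled below.

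Next, for ``$x$ cND $\Rightarrow$ $\varphi(x)$ cND'', I would suppose $\varphi(x)=u+v$ is a conformal decomposition in $\tilde C$ with nonzero $u,v\in\tilde C$. Since every element of $\tilde C$ has the form $\binom{y}{Ay}$ with $y\in C$, we may write $u=\binom{x^1}{Ax^1}$, $v=\binom{x^2}{Ax^2}$ with $x^1,x^2\in C$, both nonzero (because $\varphi$ is injective). Matching first $r$ coordinates gives $x=x^1+x^2$, and the sign condition on the first $r$ coordinates gives $\sign(x^i)\le\sign(x)$. Conformal non-decomposability of $x$ in $C$ now yields $x^1=\lambda x^2$ with $\lambda>0$, and applying $\varphi$ gives $u=\lambda v$.

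For the converse direction ``$\varphi(x)$ cND $\Rightarrow$ $x$ cND'', I would suppose $x=x^1+x^2$ with $x^1,x^2\in C$ nonzero and $\sign(x^i)\le\sign(x)$. Applying $\varphi$ yields $\varphi(x)=\varphi(x^1)+\varphi(x^2)$ in $\tilde C$ with both summands nonzero. To invoke conformal non-decomposability of $\varphi(x)$, I need $\sign\binom{x^i}{Ax^i}\le\sign\binom{x}{Ax}$. The first $r$ coordinates are fine. For the last $m$: since $Ax^1,Ax^2\ge 0$ and they sum to $Ax$, whenever $(Ax)_j=0$ we must have $(Ax^1)_j=(Ax^2)_j=0$, and when $(Ax)_j>0$ there is nothing to check. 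Hence cND of $\varphi(x)$ gives $\varphi(x^1)=\lambda \varphi(x^2)$ with $\lambda>0$, and injectivity of $\varphi$ yields $x^1=\lambda x^2$, proving $x$ is cND. The main (minor) obstacle is precisely this nonnegativity argument on the $Ax$-coordinates; everything else is bookkeeping under the bijection $\varphi$.
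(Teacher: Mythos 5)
Your proposal is correct and follows essentially the same route as the paper: both arguments reduce to showing that the premises and conclusions of the two cND definitions correspond under the map $x \mapsto \left( \begin{smallmatrix} x \\ Ax \end{smallmatrix} \right)$, with the only nontrivial point being that for summands $x^1, x^2 \in C$ of $x$, the nonnegativity of $Ax^1, Ax^2$ together with $Ax^1 + Ax^2 = Ax$ forces the sign condition on the last $m$ coordinates. Your aside about the unrestricted sign-transport claim failing (and being unnecessary) correctly identifies the same subtlety the paper handles by stating the sign equivalence only under the decomposition assumption.
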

\begin{proof}
First, we show the equivalence of the premises in the definitions of conformal non-decomposability for $C$ and $\tilde C$.
Indeed,
\begin{gather*}
x = x^1+x^2
\text{ with } x^1, x^2 \in C \\ 
\Leftrightarrow \\
\xAx{} = \xAx{1}+\xAx{2}
\text{ with } \xAx{1} \! , \xAx{2} \in \tilde C.
\end{gather*}
Assuming $x = x^1+x^2$ with $x^1, x^2 \in C$ (and hence $Ax^1, Ax^2, Ax \ge 0$), we have
\[
\sign(x^1), \, \sign(x^2) \le \sign(x)
\quad \Leftrightarrow \quad
\sign \! \xAx{1} \! , \, \sign \! \xAx{2} \le \sign \! \xAx{}.
\]
It remains to show the equivalence of the conclusions in the two definitions.
In fact,
\[
x^1= \lambda x^2 \text{ with } \lambda > 0
\quad \Leftrightarrow \quad
\xAx{1} = \lambda \! \xAx{2} \text{ with } \lambda > 0.
\]
\end{proof}

\noindent
Now, we can extend Theorem~\ref{thm} to general polyhedral cones.

\begin{thm} \label{thm:polcon}
Let $C = \{ x \mid A x \ge 0 \}$ be a polyhedral cone.
Every nonzero vector $x \in C$ is a conformal sum of EVs.
That is,
there exists a finite set $E \subseteq C$ of EVs such that
\[
x = \sum_{e \in E} e \quad \text{with } \sign(e) \le \sign(x).
\]
The set $E$ can be chosen such that $|E| \le \dim(C)$ and $|E| \le |\supp(x)| + |\supp(Ax)|$.
\end{thm}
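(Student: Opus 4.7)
The plan is to deduce the theorem from Theorem~\ref{thm} for s-cones by passing to the associated s-cone $\tilde C = C(\tilde S, m)$ via the lift $x \mapsto \bigl(\begin{smallmatrix} x \\ Ax \end{smallmatrix}\bigr)$. The key observation is that the bijection between $C$ and $\tilde C$ is sign-preserving in the sense that $\sign\bigl(\begin{smallmatrix} x \\ Ax \end{smallmatrix}\bigr) \le \sign\bigl(\begin{smallmatrix} y \\ Ay \end{smallmatrix}\bigr)$ iff $\sign(x) \le \sign(y)$ \emph{and} $\sign(Ax) \le \sign(Ay)$; on vectors that already lie in $C$ (so $Ax, Ay \ge 0$), the second condition follows from the first by linearity, so $\sign(x) \le \sign(y)$ is equivalent to conformity of the lifted vectors. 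This makes the lift respect conformal sums.

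First I would take the nonzero $x \in C$ and form $\tilde x = \bigl(\begin{smallmatrix} x \\ Ax \end{smallmatrix}\bigr) \in \tilde C$. Applying Theorem~\ref{thm} to $\tilde C$ yields a finite set $\tilde E \subseteq \tilde C$ of linearly independent SM vectors with $\tilde x = \sum_{\tilde e \in \tilde E} \tilde e$ and $\sign(\tilde e) \le \sign(\tilde x)$. By Proposition~\ref{pro:equ}, each such $\tilde e$ is cND in $\tilde C$, and by Lemma~\ref{lem:121} each $\tilde e = \bigl(\begin{smallmatrix} e \\ Ae \end{smallmatrix}\bigr)$ with $e \in C$ elementary (cND). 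Projecting onto the first $r$ coordinates gives the finite set $E = \{e \mid \tilde e \in \tilde E\} \subseteq C$ with $x = \sum_{e \in E} e$; the conformity $\sign(\tilde e) \le \sign(\tilde x)$ restricted to the first $r$ coordinates yields $\sign(e) \le \sign(x)$, which is precisely the required conformal decomposition.

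For the bounds, I would use the two bounds supplied by Theorem~\ref{thm} for the s-cone $\tilde C$. The cardinality satisfies $|E| = |\tilde E| \le \dim(\tilde S)$, and since the lift $x \mapsto \bigl(\begin{smallmatrix} x \\ Ax \end{smallmatrix}\bigr)$ is a linear bijection between $\spann(C)$ and $\tilde S$, we have $\dim(\tilde S) = \dim(\spann(C)) = \dim(C)$. Likewise, $|E| = |\tilde E| \le |\supp(\tilde x)| = |\supp(x)| + |\supp(Ax)|$, which is the claimed support bound.

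The only genuine subtlety — and what I would check carefully — is that linear independence and elementarity transfer correctly through the projection: two distinct lifted elementary vectors $\tilde e, \tilde e'$ must still project to distinct (and non-proportional) elementary vectors of $C$, since the lift is injective on $\spann(C)$, so any linear dependence among the $e$'s would pull back to a linear dependence among the $\tilde e$'s. Beyond this bookkeeping, no new induction or constructive argument is needed; the entire content of the theorem sits inside Theorem~\ref{thm}, Lemma~\ref{lem:121}, and Proposition~\ref{pro:equ}, and the proof is essentially a translation across the lift.
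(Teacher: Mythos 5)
Your proof is correct and follows essentially the same route as the paper: lift to the s-cone $\tilde C$ via $x \mapsto \bigl(\begin{smallmatrix} x \\ Ax \end{smallmatrix}\bigr)$, apply Theorem~\ref{thm}, and transfer the decomposition and both cardinality bounds back via Lemma~\ref{lem:121} (your explicit appeal to Proposition~\ref{pro:equ} to pass from SM to cND in $\tilde C$ is a step the paper leaves implicit). One small caveat: for general $x,y\in C$, $\sign(x)\le\sign(y)$ does \emph{not} by itself imply $\sign(Ax)\le\sign(Ay)$ --- that implication holds inside a conformal sum only because the nonnegative vectors $Ae$ sum to $Ax$ --- but this imprecision is not load-bearing, since the direction you actually use (restricting the lifted sign condition to the first $r$ coordinates) is trivial and the nontrivial direction is already packaged in Lemma~\ref{lem:121}.
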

\begin{proof}
Let $A \in \RR^{m \times r}$.
Define the subspace
\[
\tilde S = \{ \xAx{} \in \RR^{r+m} \mid x \in \spann(C) \}
\]
and the s-cone
\[
\tilde C = \{ \xAx{} \in \RR^{r + m} \mid x \in C \}.
\]
Let $x \in C$ be nonzero.
By Theorem~\ref{thm}, $\xAx{} \in \tilde C$ is a conformal sum of EVs.
That is, there exists a finite set $\tilde E \subseteq \tilde C$ of EVs such that
\[
\xAx{} = \sum_{\eAe{} \in \tilde E} \eAe{} \quad \text{with } \sign \! \eAe{} \le \sign \! \xAx{}.
\]
By Lemma~\ref{lem:121},
the EVs of $C$ and $\tilde C$ are in one-to-one correspondence.
Hence, there exists a finite set $E = \{ e \mid \eAe{} \in \tilde E \} \subseteq C$ of EVs such that
\[
x = \sum_{e \in E} e \quad \text{with } \sign(e) \le \sign(x).
\]
The set $\tilde E$ (and hence $E$) can be chosen such that $|E| = |\tilde E| \le \dim(\tilde S) = \dim(C)$
and $|E| = |\tilde E| \le |\supp \! \xAx{}| = |\supp(x)|+|\supp(Ax)|$.
\end{proof}

Theorem~\ref{thm:polcon} is a conformal refinement of Minkowski's and Carath\'eodory's theorems for polyhedral cones.
In fact, it remains to show that there are finitely many EVs.

\begin{pro}
For a polyhedral cone,
there are finitely many cND vectors up to positive scalar multiples.
\end{pro}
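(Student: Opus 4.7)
The plan is to reduce this finiteness claim to the analogous statement for s-cones, Proposition~\ref{pro:fin}, by reusing the lifting construction already employed in the proof of Theorem~\ref{thm:polcon}.

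Concretely, writing $C = \{x \in \RR^r \mid Ax \ge 0\}$ with $A \in \RR^{m \times r}$, I would work with the associated s-cone $\tilde C = \{\xAx{} \in \RR^{r+m} \mid x \in C\}$. Lemma~\ref{lem:121} provides a bijection $x \leftrightarrow \xAx{}$ between cND vectors of $C$ and cND vectors of $\tilde C$. Because $A$ is linear, $A(\lambda x) = \lambda Ax$, so this bijection commutes with positive scaling: two cND vectors of $C$ are positive multiples of each other precisely when their lifts in $\tilde C$ are. Hence it suffices to establish finiteness of cND vectors of $\tilde C$ up to positive multiples.

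The remaining step simply chains two earlier results. By Proposition~\ref{pro:equ}, the cND vectors of the s-cone $\tilde C$ coincide with its SM vectors; and Proposition~\ref{pro:fin} asserts that an s-cone has only finitely many SM vectors up to positive scalar multiples. Transporting this finiteness back across the bijection completes the proof. I do not expect any real obstacle here: the only content is recognising that the lift $x \mapsto \xAx{}$ respects the relation "positive scalar multiple of", which is immediate from linearity of~$A$. Every substantive ingredient — the lift to an s-cone, the equivalence cND $\Leftrightarrow$ SM in s-cones, and finiteness of SM vectors in s-cones up to positive scaling — has already been established in the paper.
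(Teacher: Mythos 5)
Your proposal is correct and follows exactly the paper's own route: lift to the s-cone $\tilde C$ via Lemma~\ref{lem:121}, invoke Proposition~\ref{pro:equ} to identify cND with SM vectors there, and conclude by Proposition~\ref{pro:fin}. The only addition is your explicit remark that the lift respects positive scaling, a detail the paper leaves implicit.
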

\begin{proof}
Let $C$ be a polyhedral cone and $\tilde C$ the related s-cone.
By Lemma~\ref{lem:121}, the cND vectors of $C$ and $\tilde C$ are in one-to-one correspondence.
By Proposition~\ref{pro:equ}, the cND and SM vectors of $\tilde C$ coincide,
and by Proposition~\ref{pro:fin}, there are finitely many SM vectors.
\end{proof}

In~\cite{UrbanczikWagner2005},
EVs of a polyhedral cone $C$ were equivalently defined as extreme vectors of intersections of $C$
with closed orthants of maximal dimension.
Indeed, the following equivalence holds for closed orthants, not necessarily of maximal dimension.
\begin{pro}
Let $C \subseteq \RR^r$ be a polyhedral cone,
$x \in C$, and $O \subset \RR^r$ a closed orthant with $x \in O$.
Then,
\[
x \in C \text{ is cND}
\quad \Leftrightarrow \quad
x \in C \cap O \text{ is EX}.
\]
\end{pro}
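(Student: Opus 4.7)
The plan is to unpack both sides of the equivalence using only the definitions of cND and EX, together with the characterization $O=\{y\mid \sign(y)\le X\}$ for some fixed $X\in\{-,0,+\}^r$ with $\sign(x)\le X$ (since $x\in O$). The core observation is a ``no cancelation within an orthant'' fact: if $x^1,x^2\in O$ and $x=x^1+x^2$, then automatically $\sign(x^1),\sign(x^2)\le\sign(x)$. Indeed, for each component $i$: if $X_i=0$ then $x^1_i=x^2_i=0$; if $X_i=+$ then $x^1_i,x^2_i\ge 0$, and $x_i=0$ forces $x^1_i=x^2_i=0$, while $x_i>0$ gives the desired bound; and symmetrically for $X_i=-$. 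I would state this as an initial remark.

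For the forward implication ($\Rightarrow$), assume $x\in C$ is cND and suppose $x=x^1+x^2$ with nonzero $x^1,x^2\in C\cap O$. Since $x^1,x^2\in O$, the observation above yields $\sign(x^1),\sign(x^2)\le\sign(x)$. Together with $x^1,x^2\in C$, the cND property applies and gives $x^1=\lambda x^2$ with $\lambda>0$. This is exactly the extremality of $x$ in $C\cap O$.

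For the reverse implication ($\Leftarrow$), assume $x$ is extreme in $C\cap O$ and suppose $x=x^1+x^2$ with nonzero $x^1,x^2\in C$ satisfying $\sign(x^1),\sign(x^2)\le\sign(x)$. Since $\sign(x)\le X$, we get $\sign(x^i)\le X$, hence $x^i\in O$, and therefore $x^i\in C\cap O$. Extremality now gives $x^1=\lambda x^2$ with $\lambda>0$, which is the cND condition for $x$ in $C$.

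Neither direction presents a real obstacle; the only subtle point is the sign-lifting remark in the forward direction, since the hypothesis $x^1,x^2\in O$ a priori only says $\sign(x^i)\le X$ rather than the stronger $\sign(x^i)\le\sign(x)$ that the cND definition needs. Once that is made explicit, both implications are immediate from the definitions.
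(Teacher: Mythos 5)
Your proof is correct and follows essentially the same route as the paper: both reduce the claim to the equivalence of the premises in the two definitions (the conclusions being identical), namely that under $x=x^1+x^2$ one has $x^1,x^2\in C$ with $\sign(x^1),\sign(x^2)\le\sign(x)$ if and only if $x^1,x^2\in C\cap O$. Your explicit component-wise ``no cancelation within an orthant'' remark is precisely the step the paper leaves implicit.
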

\begin{proof}
We show the equivalence of the premises in the definitions of conformal non-decomposability for $C$
and extremity for $C \cap O$.
(The conclusions are identical.)
Indeed, assuming $x=x^1+x^2$, we have
\[
x^1,x^2 \in C \text{ with } \sign(x^1), \, \sign(x^2) \le \sign(x)
\quad \Leftrightarrow \quad
x^1, x^2 \in C \cap O.
\]
\end{proof}


\newcommand{\xxi}{\left( \begin{smallmatrix} x \\ \xi \end{smallmatrix} \right)}

\newcommand{\xAxx}[1]{\left( \begin{smallmatrix} x^{#1} \\ \xi^{#1} \\ Ax^{#1}-\xi^{#1} b \end{smallmatrix} \right)}
\newcommand{\xAxz}[1]{\left( \begin{smallmatrix} x^{#1} \\ 0 \\ Ax^{#1} \end{smallmatrix} \right)}
\newcommand{\xAxo}[1]{\left( \begin{smallmatrix} x^{#1} \\ 1 \\ Ax^{#1}-b \end{smallmatrix} \right)}

\newcommand{\eAez}[1]{\left( \begin{smallmatrix} e_{#1} \\ 0 \\ Ae_{#1} \end{smallmatrix} \right)}
\newcommand{\eAeo}[1]{\left( \begin{smallmatrix} e_{#1} \\ 1 \\ Ae_{#1}-b \end{smallmatrix} \right)}


\subsection{Polyhedra} \label{sec:pol}

Let $P$ be a polyhedron, that is,
\[
P = \{ x \in \RR^r \mid A x \ge b \} \quad \text{for some } A \in \RR^{m \times r} \text{ and } b \in \RR^m.
\]
In order to extend Theorem~\ref{thm} to polyhedra,
we introduce corresponding special vectors.

\subsubsection*{Special vectors}

Let $P$ be a polyhedron.
A vector $x \in P$ is called
\begin{itemize}
\item
a \emph{vertex}, if
\begin{alignat}{1} \label{ve}
& \text{for all } x^1, x^2 \in P \text{ and } 0<\lambda<1, \nonumber \\
& x = \lambda x^1 + (1-\lambda) x^2 \text{ implies } x^1=x^2, \tag{VE}
\end{alignat}
\item
and \emph{convex-conformally non-decomposable}, if
\begin{alignat}{1} \label{ccnd}
& \text{for all } x^1, x^2 \in P \text{ with } \sign(x^1), \, \sign(x^2) \le \sign(x) \text{ and } 0<\lambda<1, \nonumber \\
& x = \lambda x^1 + (1-\lambda) x^2 \text{ implies } x^1=x^2. \tag{ccND}
\end{alignat}
\end{itemize}

From the definitions, we have
\[
\text{VE} \Rightarrow \text{ccND}.
\]

\noindent
For a polyhedral cone, we defined elementary vectors (EVs) via conformal non-decomposability.
For a polyhedron, we require two sorts of EVs: convex-conformally non-decomposable vectors of the polyhedron
and conformally non-decomposable vectors of its recession cone.

\begin{dfn}
Let $P = \{ x \in \RR^r \mid Ax \ge b \}$ be a polyhedron and $C^r = \{ x \in \RR^r \mid Ax \ge 0 \}$ its recession cone.
A vector $e \in C^r \cup P$ is called an {\em elementary vector of $P$}
if either $e \in C^r$ is conformally non-decomposable
or $e \in P$ is convex-conformally non-decomposable.
\end{dfn}

In order to apply Theorem~\ref{thm},
we define an s-cone related to a polyhedron $P = \{ x \in \RR^r \mid A x \ge b \}$.
We introduce the {\em homogenization}
\[
C^h = \{ \xxi \in \RR^{r+1} \mid \xi \ge 0 \text{ and } Ax - \xi b \ge 0 \}
\]
of the polyhedron,
the subspace
\[
\tilde S = \{ \xAxx{} \in \RR^{r+1+m} \mid \xxi \in \spann(C^h) \}
\]
with $\dim(\tilde S) = \dim(C^h) = \dim(P)+1$,
and the s-cone
\begin{align*}
\tilde C &= C(\tilde S, 1+m) \\
&= \{ \xAxx{} \in \RR^{r+1+m} \mid \xxi \in \spann(C^h), \, \xi \ge 0, \text{ and } A x - \xi b \ge 0 \} \\
&= \{ \xAxx{} \in \RR^{r+1+m} \mid \xxi \in C^h \} .
\end{align*}
Hence,
\[
\xxi \in C^h
\quad \Leftrightarrow \quad
\xAxx{} \in \tilde C.
\]
Moreover, the cND vectors of $C^r$ and the ccND vectors of $P$
are in one-to-one correspondence with the cND vectors of $\tilde C$.

\begin{lem} \label{lem:121121}
Let $P = \{ x \mid A x \ge b \}$ be a polyhedron, $C^r = \{ x \mid Ax \ge 0 \}$ its recession cone,
and
\[
\tilde C = \{ \xAxx{} \in \RR^{r+1+m} \mid \xi \ge 0 \text{ and } Ax - \xi b \ge 0 \}
\]
the related s-cone.
Then,
\begin{gather*}
x \in C^r \text{ is cND}
\quad \Leftrightarrow \quad
\xAxz{} \in \tilde C \text{ is cND}
\intertext{and}
x \in P \text{ is ccND }
\quad \Leftrightarrow \quad
\xAxo{} \in \tilde C \text{ is cND}.
\end{gather*}
\end{lem}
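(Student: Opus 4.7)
The plan is to mirror the proof of Lemma~\ref{lem:121} while handling the two separate equivalences and the new middle coordinate $\xi$. The key structural observation is that in $\tilde C$, a sign-conforming summand of $\xAxz{}$ must have middle component $0$ (since any sign $\le \sign(0)$ equals $0$), whereas a sign-conforming summand of $\xAxo{}$ must have middle component in $[0,1]$. This dichotomy cleanly separates the two cases.

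For the first equivalence, I would start with a sign-conforming decomposition $\xAxz{}=v^1+v^2$ in $\tilde C$. The middle components force $\eta^i=0$, so $v^i=(y^i,0,Ay^i)^T$ with $y^i\in C^r$ and $x=y^1+y^2$. Sign-conformance of the first $r$ coordinates is inherited directly; conformance of the last $m$ coordinates is automatic because $Ay^1,Ay^2\ge 0$ sum to $Ax$, so any zero entry of $Ax$ forces both summand entries to vanish. The equivalence with cND of $C^r$ is then immediate in both directions (the converse just lifts a decomposition of $x$ to $\tilde C$ by adjoining the zero middle coordinate and the entries $Ay^i$).

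For the second equivalence, the forward direction is the nontrivial one. Given a sign-conforming decomposition $\xAxo{}=v^1+v^2$ with $v^i=(y^i,\eta^i,Ay^i-\eta^i b)^T$, I would split on the middle components $\eta^1,\eta^2\ge 0$ (summing to $1$). If both are strictly positive, rescale $z^i:=y^i/\eta^i\in P$; signs are preserved under positive scaling, so $x=\eta^1 z^1+\eta^2 z^2$ is a sign-conforming convex combination in $P$, and ccND gives $z^1=z^2$, whence $v^1,v^2$ are positively proportional. If, say, $\eta^1=0$, then $y^1\in C^r$ and $y^2=x-y^1\in P$; here the auxiliary decomposition
\[
x=\tfrac{1}{2}(x+y^1)+\tfrac{1}{2}(x-y^1)
\]
is a sign-conforming convex combination in $P$ (both endpoints lie in $P$, and $\sign(y^1)\le\sign(x)$ together with $\sign(y^2)\le\sign(x)$ guarantee sign-conformance), so ccND forces $y^1=0$, contradicting the nontriviality of $v^1$. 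The converse is a routine lift: any sign-conforming convex combination $x=\lambda x^1+(1-\lambda)x^2$ in $P$ extends component-wise to $\xAxo{}=\lambda(x^1,1,Ax^1-b)^T+(1-\lambda)(x^2,1,Ax^2-b)^T$ in $\tilde C$, with conformance of the last $m$ coordinates handled by the same nonnegative-sum argument as in part one; then cND yields proportionality, and comparing middle coordinates ($\lambda$ versus $1-\lambda$) recovers $x^1=x^2$.

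The main obstacle is the degenerate case $\eta^1\in\{0,1\}$ in the forward direction of the second equivalence: proportionality is impossible there since $0$ and $1$ are not positively proportional. The trick is to manufacture the auxiliary convex combination $\tfrac{1}{2}(x+y^1)+\tfrac{1}{2}(x-y^1)$ inside $P$ in order to invoke ccND and conclude that the $C^r$-summand vanishes, thereby ruling out the degenerate case and reducing everything to the generic one.
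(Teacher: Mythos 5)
Your proposal is correct and follows essentially the same route as the paper: the first equivalence reduces to the argument of Lemma~\ref{lem:121} via the forced zero middle coordinate, and for the second equivalence your case split on $\eta^1,\eta^2$ matches the paper's, with your auxiliary combination $x=\tfrac12(x+y^1)+\tfrac12(x-y^1)$ being exactly the paper's $x=\tfrac12 x^1+\tfrac12(x^1+2x^2)$ in different notation, and the generic case and the converse lift likewise coinciding with the paper's sum $(+)$ and equation $(\ast)$.
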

\begin{proof}
See Appendix.
\end{proof}

\noindent
Now, we can extend Theorem~\ref{thm} to polyhedra.

\begin{thm} \label{thm:pol}
Let $P = \{ x \mid A x \ge b \}$ be a polyhedron and $C^r = \{ x \mid Ax \ge 0 \}$ its recession cone.
Every vector $x \in P$ is a conformal sum of EVs.
That is,
there exist finite sets $E_0 \subseteq C^r$ and $E_1 \subseteq P$ of EVs such that
\[
x = \sum_{e \in E_0} e + \sum_{e \in E_1} \lambda_e e \quad \text{with } \sign(e) \le \sign(x),
\]
$\lambda_e \ge 0$, and $\sum_{e \in E_1} \lambda_e = 1$.
(Hence, $|E_1| \ge 1$.)
\par
The set $E = E_0 \cup E_1$ can be chosen such that $|E| \le \dim(P)+1$ and $|E| \le |\supp(x)| + |\supp(Ax)| + 1$.
\end{thm}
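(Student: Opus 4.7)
The plan is to lift the problem to the s-cone $\tilde C$ built above the homogenization $C^h$, apply Theorem~\ref{thm} there, and translate the resulting decomposition back to $P$ via Lemma~\ref{lem:121121}. Given $x \in P$, its canonical lift $\left( \begin{smallmatrix} x \\ 1 \\ Ax-b \end{smallmatrix} \right)$ lies in $\tilde C$, so Theorem~\ref{thm} yields a finite conformal decomposition of this lift into SM (hence cND, by Proposition~\ref{pro:equ}) vectors of $\tilde C$, together with both bounds on the number of summands.

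Each summand $\tilde e$ inherits conformality with the lift, and in particular its middle ($\xi$) component is $\ge 0$ since the middle component of the lift is $+1$. I split the summands by their $\xi$-component. Those with $\xi = 0$ have the form $\left( \begin{smallmatrix} e \\ 0 \\ Ae \end{smallmatrix} \right)$ and, by Lemma~\ref{lem:121121}, the first block $e$ is a cND vector of $C^r$; collect these into $E_0$. Those with $\xi > 0$ can be rescaled so that the middle coordinate equals $1$, so each such summand equals $\lambda_e \left( \begin{smallmatrix} e \\ 1 \\ Ae-b \end{smallmatrix} \right)$ for some ccND vector $e \in P$ and some $\lambda_e > 0$; collect these (with the corresponding coefficients) into $E_1$. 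Reading off the middle block of the identity $\sum \tilde e = \left( \begin{smallmatrix} x \\ 1 \\ Ax-b \end{smallmatrix} \right)$ gives $\sum_{e \in E_1}\lambda_e = 1$ (so $E_1$ is nonempty and supplies the convex-combination part), and reading off the first block gives $x = \sum_{e \in E_0} e + \sum_{e \in E_1} \lambda_e e$. Conformality $\sign(e) \le \sign(x)$ for every $e \in E_0 \cup E_1$ follows from the conformality of the lifted decomposition restricted to the first block, using $\lambda_e > 0$. The cardinality bounds are inherited directly from Theorem~\ref{thm} in $\tilde C$, since $\dim(\tilde S) = \dim(P)+1$ and $|E| = |\tilde E|$ is controlled by the support of the lift.

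The main obstacle is not deep but is delicate bookkeeping: one has to verify that the $\xi$-component of every summand is really nonnegative (so that the two-flavor classification of EVs in Lemma~\ref{lem:121121} is exhaustive), and that the rescaling that sends an EV of $\tilde C$ with $\xi > 0$ to a ccND vector of $P$ produces exactly the claimed convex combination. Once those are checked, the argument is a direct parallel of the proof of Theorem~\ref{thm:polcon}, with the single new feature being that the $\xi$-coordinate of the lift is what forces $\sum_{e \in E_1} \lambda_e = 1$ rather than an arbitrary nonnegative number.
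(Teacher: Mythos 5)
Your proposal is correct and follows essentially the same route as the paper's proof in the Appendix: lift $x$ to $\left( \begin{smallmatrix} x \\ 1 \\ Ax-b \end{smallmatrix} \right)$ in the s-cone $\tilde C$ over the homogenization, apply Theorem~\ref{thm}, split the summands by their $\xi$-component (which conformality forces to be nonnegative), normalize the $\xi>0$ ones to $\xi=1$, and translate back via Lemma~\ref{lem:121121}, with the cardinality bounds inherited from $\dim(\tilde S)=\dim(P)+1$ and the support of the lift. The bookkeeping you flag (nonnegativity of $\xi$, rescaling to ``normalized'' EVs, reading $\sum_{e\in E_1}\lambda_e=1$ off the middle block) is exactly what the paper's proof does, just stated more tersely there.
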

\begin{proof}
By defining an s-cone related to $P$, applying Theorem~\ref{thm}, and using Lemma~\ref{lem:121121}.
See Appendix.
\end{proof}

Theorem~\ref{thm:pol} is a conformal refinement of Minkowski's and Carath\'eodory's theorems for polyhedra.
In fact, it remains to show that there are finitely many EVs.

\begin{pro}
For a polyhedron,
there are finitely many ccND vectors.
\end{pro}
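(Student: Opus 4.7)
The plan is to use the same bridge as in the previous two finiteness results, namely the related s-cone $\tilde C$ constructed in Subsection~\ref{sec:pol}, and then invoke Propositions~\ref{pro:equ} and~\ref{pro:fin}. What is new here, compared to the analogous statement for polyhedral cones, is that we must count ccND vectors themselves, not equivalence classes modulo positive scaling; the key observation is that the correspondence from Lemma~\ref{lem:121121} already normalizes the scale.

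Concretely, I would proceed as follows. Let $P = \{x \mid Ax \ge b\}$ be a polyhedron and let $\tilde C$ be the associated s-cone defined before Lemma~\ref{lem:121121}. By that lemma, every ccND vector $x \in P$ corresponds to the cND vector $\bigl(\begin{smallmatrix} x \\ 1 \\ Ax-b \end{smallmatrix}\bigr) \in \tilde C$, and this map is injective (the pre-image $x$ is read off from the first $r$ coordinates). Since $\tilde C$ is an s-cone, Proposition~\ref{pro:equ} gives $\mathrm{cND} \Leftrightarrow \mathrm{SM}$ in $\tilde C$, and Proposition~\ref{pro:fin} ensures that there are finitely many SM vectors in $\tilde C$ up to positive scalar multiples.

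To finish, I would note that among all positive multiples of a fixed SM vector of $\tilde C$, at most one has its $(r+1)$-st coordinate equal to $1$. Therefore the injective map $x \mapsto \bigl(\begin{smallmatrix} x \\ 1 \\ Ax-b \end{smallmatrix}\bigr)$ hits at most one representative per positive-multiple class of SM vectors, so the image, and hence the set of ccND vectors of $P$, is finite.

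The only mildly delicate point is the normalization step in the last paragraph; it is what converts "finiteness up to scaling" from Proposition~\ref{pro:fin} into outright finiteness of ccND vectors. Everything else is a direct application of results already established earlier in the excerpt, so I expect no substantial obstacle.
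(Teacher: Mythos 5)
Your proposal is correct and follows essentially the same route as the paper: pass to the related s-cone via Lemma~\ref{lem:121121}, then combine Proposition~\ref{pro:equ} with Proposition~\ref{pro:fin}. Your explicit normalization step (at most one positive multiple of each SM vector of $\tilde C$ has $(r+1)$-st coordinate equal to $1$) is a detail the paper leaves implicit, and it correctly upgrades finiteness up to scaling to outright finiteness of the ccND vectors.
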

\begin{proof}
Let $P$ be a polyhedron and $\tilde C$ the related s-cone.
By Lemma~\ref{lem:121121}, the ccND vectors of $P$ are in one-to-one correspondence
with a subset of cND vectors of $\tilde C$.
By Proposition~\ref{pro:equ}, the cND and SM vectors of $\tilde C$ coincide,
and by Proposition~\ref{pro:fin}, there are finitely many SM vectors.
\end{proof}

EVs of a polyhedron $P$ can be equivalently defined as vertices of intersections of $P$
with closed orthants.
\begin{pro}
Let $P \subseteq \RR^r$ be a polyhedron,
$x \in P$, and $O \subset \RR^r$ a closed orthant with $x \in O$.
Then,
\[
x \in P \text{ is ccND}
\quad \Leftrightarrow \quad
x \in P \cap O \text{ is VE}.
\]
\end{pro}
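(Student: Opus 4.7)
The plan is to mirror the proof of the analogous proposition for polyhedral cones (the one that identifies cND vectors of $C$ with EX vectors of $C \cap O$). As in that case, the conclusions of the two definitions (ccND and VE) are literally the same, namely $x^1 = x^2$, so the entire task reduces to showing that the two premises are equivalent, given that $x \in P \cap O$.

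Let $X \in \{-,0,+\}^r$ be the sign vector defining $O$, so $O = \{y : \sign(y) \le X\}$. For the easy direction ($\Rightarrow$ in the proposition, i.e., $\Leftarrow$ in the premises), I would observe that if $x^1, x^2 \in P$ with $\sign(x^1), \sign(x^2) \le \sign(x)$, then since $x \in O$ gives $\sign(x) \le X$, transitivity of $\le$ on $\{-,0,+\}^r$ yields $\sign(x^1), \sign(x^2) \le X$, so $x^1, x^2 \in P \cap O$.

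The other direction ($\Leftarrow$ in the proposition, i.e., $\Rightarrow$ in the premises) is the step that requires a genuine argument. Assuming $x = \lambda x^1 + (1-\lambda) x^2$ with $x^1, x^2 \in P \cap O$ and $0 < \lambda < 1$, I would verify componentwise that $\sign(x^j) \le \sign(x)$ for $j=1,2$. The case split is on $X_i$: if $X_i = 0$, all three components vanish; if $X_i = +$, then $x^1_i, x^2_i \ge 0$, and either $x_i > 0$ (so $\sign(x^j_i) \in \{0,+\} \le +$) or $x_i = 0$, in which case strict positivity of both convex weights forces $x^1_i = x^2_i = 0$; the case $X_i = -$ is symmetric. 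Thus $x^1, x^2 \in P$ with $\sign(x^1), \sign(x^2) \le \sign(x)$, as required.

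The main (minor) obstacle is this last componentwise check, specifically the observation that a convex combination with both weights in $(0,1)$ of two vectors of the same sign pattern can vanish in a coordinate only if both summands already vanish there. Once this is in hand, the equivalence of premises is immediate, the equality of conclusions is tautological, and the proposition follows.
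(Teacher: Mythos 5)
Your proposal is correct and follows essentially the same route as the paper: note that the conclusions of ccND and VE coincide, and prove the equivalence of the premises, using $\sign(x)\le X$ for one direction and a componentwise argument (a convex combination with strictly positive weights of same-signed entries vanishes only if both entries vanish) for the other. The paper merely asserts this equivalence of premises without spelling out the componentwise check, so your write-up just makes explicit what the paper leaves implicit.
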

\begin{proof}
We show the equivalence of the premises in the definitions
of convex-conformal non-decomposability for $P$
and of a vertex for $P \cap O$.
(The conclusions are identical.)
Indeed, assuming $x=\lambda x^1+(1-\lambda) x^2$ with $0<\lambda<1$, we have
\[
x^1,x^2 \in P \text{ with } \sign(x^1), \, \sign(x^2) \le \sign(x)
\quad \Leftrightarrow \quad
x^1, x^2 \in P \cap O.
\]
\end{proof}

We conclude by noting that Theorem~\ref{thm:polcon} is a special case of Theorem~\ref{thm:pol}.
If a polyhedron is also a cone, then $P=C^r$, $E_1=\{0\}$, and $\sum_{e \in E_1} \lambda_e e = 0$.
However, we do not use Theorem~\ref{thm:polcon} to prove Theorem~\ref{thm:pol}.
In classical proofs of Minkowski's and Carath\'eodory's theorems,
one first studies polyhedral cones and then extends the results to polyhedra by a method called homogenization/dehomogenization; see e.g.~\cite{Ziegler1995}.

\subsection{Minimal generating sets}

For a pointed polyhedral cone,
the extreme rays form a minimal set of generators with respect to addition.
The set is minimal in the sense that no proper subset forms a generating set
and minimal in the even stronger sense that it is contained in every other generating set.
Hence, the extreme rays form a {\em unique} minimal set of generators.

For a general polyhedral cone,
there are minimal sets of generators (minimal in the sense that no proper subset forms a generating set),
but there is no unique minimal generating set.
However, there is a unique minimal set of {\em conformal} generators,
namely the set of elementary vectors.

Recall that elementary vectors of a polyhedral cone are defined as conformally non-decomposable vectors.
Indeed, every nonzero element of a polyhedral cone is a conformal sum of elementary vectors (Theorem \ref{thm:polcon}),
and every elementary vector is contained in a set of conformal generators.

We make the above argument more formal.

\begin{dfn}
Let $C$ be a polyhedral cone. A subset $G \subseteq C$ is called a \emph{conformal generating set}
if (i) every nonzero vector $x \in C$ is a conformal sum of vectors in $G$,
that is, if there exists a finite set $G_x \subset G$ such that
\[
x = \sum_{g \in G_x} g \quad \text{with } \sign(g) \le \sign(x) ,
\]
and (ii) if $\lambda G=G$ for all $\lambda>0$.
\end{dfn}

\begin{pro}
\label{pro:min}
Let $C$ be a polyhedral cone,
$E \subseteq C$ the set of elementary vectors, and $G \subseteq C$ a conformal generating set.
Then, $E \subseteq G$.
\end{pro}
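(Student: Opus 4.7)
The plan is to pick an arbitrary elementary vector $e \in E$ and show $e \in G$ by exploiting the conformal non-decomposability of $e$ together with the scaling invariance $\lambda G = G$.

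First, I would invoke the conformal generating property to write
\[
e = \sum_{i=1}^k g_i, \quad g_i \in G, \quad \sign(g_i) \le \sign(e),
\]
with the $g_i$ distinct and (without loss of generality) nonzero. If $k = 1$ there is nothing to show, since $e = g_1 \in G$. So assume $k \ge 2$, and split the sum as $e = g + g'$ with $g := g_1$ and $g' := \sum_{i \ge 2} g_i$. Both lie in $C$ (being sums of elements of $C$), and both conform to $e$: each summand $g_i$ satisfies $\sign(g_i) \le \sign(e)$, and a sum of vectors that agree in sign with $e$ componentwise still agrees in sign with $e$ componentwise.

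Now I would apply conformal non-decomposability of $e$ to the decomposition $e = g + g'$. If $g' = 0$, we immediately get $e = g \in G$. Otherwise, $g$ and $g'$ are both nonzero, both in $C$, and both conform to $e$, so (cND) forces $g = \lambda g'$ for some $\lambda > 0$. Substituting, $e = (1+\lambda) g'$, which means $g = \tfrac{\lambda}{1+\lambda}\, e$, i.e.\ $g$ is a positive scalar multiple of $e$. Using condition (ii) in the definition of a conformal generating set ($\mu G = G$ for all $\mu > 0$), we conclude $e = \tfrac{1+\lambda}{\lambda}\, g \in G$.

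I do not anticipate a real obstacle here; the argument is short and direct. The only point that requires a little care is verifying that the ``tail'' $g'$ of the conformal sum again conforms to $e$ (so that the decomposition $e = g + g'$ really is a conformal decomposition of $e$ and hence witnesses cND), and keeping track of the degenerate cases $k=1$ and $g'=0$ so that we do not invoke the cND clause with a zero summand. The essential ingredients are exactly the two defining properties of a conformal generating set: (i) provides the sum, and (ii) allows us to rescale a positive multiple of $e$ back to $e$ itself.
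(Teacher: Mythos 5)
Your proof is correct and takes essentially the same route as the paper's: split off one generator $g$ from the conformal sum, note the tail $g'$ again conforms to $e$, handle the degenerate cases, and apply (cND) to conclude $e$ is a positive multiple of an element of $G$, which lies in $G$ by the scaling invariance $\lambda G = G$. Your explicit check that the tail conforms to $e$ and your separate treatment of $g'=0$ are minor refinements of the same argument.
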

\begin{proof}
Let $e\in C$ be an elementary vector. Since $G$ is a conformal generating set, we have
\[
e = g^*+h \quad \text{with } \sign(g^*), \, \sign(h) \le \sign(x),
\]
where we choose a nonzero $g^* \in G_e \subset G$ and set $h = \sum_{g \in G_e \setminus \{g^*\}} g \in C$.
If $|G_e|=1$, then $h=0$ and $e=g^*\in G$.
Otherwise, since $e$ is an elementary vector (a cND vector),
we have $h = \lambda g^*$ with $\lambda>0$ and hence $e = (1+\lambda) g^* \in G$.
\end{proof}

Analogously, for a polyhedron,
there is a unique minimal set of conformal  generators,
namely the set of elementary vectors.

\subsection{Examples}

We illustrate our results by examples of polyhedral cones and polyhedra in two dimensions,
and we return to the running example from the introduction.

\begin{exa}
The s-cone $C = \{ x \mid x_1\ge0 , \, x_2\ge0 \}$.
\begin{center}
\begin{tikzpicture}[scale=2]
\fill[fill=gray] (0,0) -- (1.5,0) -- (1.5,1.5) -- (0,1.5) -- cycle;
\draw[thick] (-0.25,0) -- (1.75,0) node[right] {$x_1$};
\draw[thick] (0,-0.25) -- (0,1.75) node[above] {$x_2$};
\draw[thick,->,>=triangle 60] (0,0) -- (1.5,0) node[yshift=-2ex] {$r^1$};
\draw[thick,->,>=triangle 60] (0,0) -- (0,1.5) node[xshift=-2ex] {$r^2$};
\end{tikzpicture}
\end{center}
Its EVs (SM vectors) are elements of the rays $r^1 = \{ x \mid x_1>0 , \, x_2=0 \}$ and $r^2 = \{ x \mid x_1=0 , \, x_2>0 \}$
(indicated by arrows).
Every nonzero vector $x \in C$ is a conformal sum of EVs.
That is,
\[
x = e^1 + e^2 ,
\]
where $e^1 \in r^1$ and $e^2 \in r^2$.
\end{exa}

\newpage

\begin{exa}
The general polyhedral cone $C = \{ x \mid \begin{pmatrix} 3 & 1 \\ -1 & 1 \end{pmatrix}
\begin{pmatrix} x_1 \\ x_2 \end{pmatrix} \ge 0 \}$.
\begin{center}
\begin{tikzpicture}[scale=2]
\fill[fill=gray] (-0.5,1.5) -- (0,0) -- (1.5,1.5) -- cycle;
\draw[thick] (-0.5,0) -- (1.5,0) node[right] {$x_1$};
\draw[thick] (0,-0.25) -- (0,1.75) node[above] {$x_2$};
\draw[thick,->,>=triangle 60] (-0.475,1.425) -- (-0.5,1.5) node[above] {$r^1$};
\draw[thick,->,>=open triangle 60,fill=white] (0,1.45) -- (0,1.5) node[xshift=2ex,above] {$r^2$};
\draw[thick,->,>=triangle 60] (1.45,1.45) -- (1.5,1.5) node[above] {$r^3$};
\end{tikzpicture}
\end{center}
Its EVs (cND vectors) are elements of the rays $r^1$, $r^2$, and $r^3$.
Note that $r^2$ is not an extreme ray of $C$,
but an extreme ray of $C \cap \RR^2_\ge$,
the intersection of the cone with the nonnegative orthant.
Every nonzero vector $x \in C$ is a conformal sum of EVs.
In particular, if $x \in C \cap \RR^2_\ge$, then
\[
x = e^2 + e^3 ,
\]
where $e^2 \in r^2$ and $e^3 \in r^3$.
\end{exa}


\begin{exa}
The polyhedron $P= \{ x \mid \begin{pmatrix} 3 & 1 \\ -3 & 3 \\ 0 & 2 \end{pmatrix}
\begin{pmatrix} x_1 \\ x_2 \end{pmatrix} \ge \begin{pmatrix} 1 \\ -1 \\ 1 \end{pmatrix} \}$.
\begin{center}
\begin{tikzpicture}[scale=2]
\fill[fill=gray] (-0.17,1.5) -- (0.17,0.5) -- (0.83,0.5) -- (1.83,1.5) -- cycle;
\draw[thick,dashed] (-0.5,1.5) -- (0,0) -- (1.5,1.5);
\draw[thick] (-0.5,0) -- (1.5,0) node[right] {$x_1$};
\draw[thick] (0,-0.25) -- (0,0);
\draw[thick] (0,1.5) -- (0,1.75);
\draw[thick,dashed] (0,0) -- (0,1.75) node[above] {$x_2$};
\draw[thick,->,>=triangle 60] (-0.475,1.425) -- (-0.5,1.5) node[above] {$r^1$};
\draw[thick,->,>=open triangle 60] (0,1.45) -- (0,1.5) node[xshift=2ex,above] {$r^2$};
\draw[thick,->,>=triangle 60] (1.45,1.45) -- (1.5,1.5) node[above] {$r^3$};
\draw[thick,-o] (0,0.95) -- (0,1.05) node[xshift=-2ex,yshift=-0.5ex] {$e^4$};
\draw[fill=black] (0.17,0.5) circle (0.3ex) node[yshift=-2ex] {$e^5$};
\draw[fill=black] (0.83,0.5) circle (0.3ex) node[yshift=-2ex] {$e^6$};
\end{tikzpicture}
\end{center}
Its EVs are elements of the rays $r^1$, $r^2$, and $r^3$ (cND vectors of the recession cone)
and the vectors $e^4$, $e^5$, and $e^6$ (ccND vectors of the polyhedron).
Note that $e^4$ is not a vertex of $P$,
but a vertex of $P \cap \RR^2_\ge$,
the intersection of the polyhedron with the nonnegative orthant.
Every vector $x \in P$ is a conformal sum of EVs.
In particular, if $x \in P \cap \RR^2_\ge$, then
\[
x = (e^2 + e^3) + (\lambda_4 e^4 + \lambda_5 e^5 + \lambda_6 e^6) ,
\]
where $e^2 \in r^2$, $e^3 \in r^3$ and $\lambda_4,\lambda_5,\lambda_6 \ge 0$ with $\lambda_4 + \lambda_5 + \lambda_6 = 1$.
\end{exa}

\newpage

Finally, we return to the running example from the introduction.
We restate the underlying network, the corresponding stoichiometric matrix and the resulting flux cone:

\bigskip
\hspace{-0.05\textwidth}
\begin{minipage}[c]{0.40\textwidth}
\xymatrix{
\ast \ar[r]^1 & X_1 \ar[r]^2 \ar@{<->}[d]^4 & X_2 \ar[r]^3 & \ast \\
& \ast
}
\end{minipage}
\hfill
\begin{minipage}[c]{0.55\textwidth}
\vspace{-2.ex}
\begin{align*}
N &=
\begin{pmatrix}
1 & -1 & 0 & -1 \\
0 & 1 & -1 & 0
\end{pmatrix}  , \\[1ex]
C &= \{ f \in \RR^4 \mid N f = 0 \text{ and } f_1,f_2,f_3 \ge 0 \} .
\end{align*}
\end{minipage}
\bigskip

\noindent
Its EVs (SM vectors) are 
\[
e^1 = \begin{pmatrix} 1 \\ 0 \\ 0 \\ 1 \end{pmatrix}, \,
e^2 = \begin{pmatrix} 0 \\ 1 \\ 1 \\ -1 \end{pmatrix}, \,
e^3 = \begin{pmatrix} 1 \\ 1 \\ 1 \\ 0 \end{pmatrix},
\]
and their positive multiples.
In other words, the EVs are elements of the rays $r^1 = \{ \lambda \, e^1 \mid \lambda>0 \}$,
$r^2 = \{ \lambda \, e^2 \mid \lambda>0 \}$, and $r^3 = \{ \lambda \, e^3 \mid \lambda>0 \}$.

The flux cone is defined by the stoichiometric matrix and the set of irreversible reactions.
If additionally lower/upper bounds for the fluxes through certain reactions are known,
then one is interested in the resulting flux polyhedron.
In the example, we add an upper bound for the flux through reaction~1,
in particular, we require $f_1 \le 2$ and obtain the flux polyhedron
\[
P = \{ f \in \RR^4 \mid N f = 0, \, f_1,f_2,f_3 \ge 0 , \text{ and } f_1 \le 2 \} .
\]
Its EVs are elements of the ray $r^2 = \{ \lambda \, e^2 \mid \lambda>0 \}$ (cND vectors of the recession cone)
and the vectors $e^1, e^3, e^4$ (ccND vectors of the polyhedron), where
\[
e^1 = \begin{pmatrix} 2 \\ 0 \\ 0 \\ 2 \end{pmatrix}, \,
e^2 = \begin{pmatrix} 0 \\ 1 \\ 1 \\ -1 \end{pmatrix}, \,
e^3 = \begin{pmatrix} 2 \\ 2 \\ 2 \\ 0 \end{pmatrix}, \,
e^4 = \begin{pmatrix} 0 \\ 0 \\ 0 \\ 0 \end{pmatrix}.
\]
Note that $e^3$ is not a vertex of $P$,
but a vertex of $P \cap \RR^4_\ge$,
the intersection of the polyhedron with the nonnegative orthant.
Every vector $x \in P$ is a conformal sum of EVs.
In particular, if $x \in P \cap \RR^4_\ge$, then
\[
x = \lambda_1 e^1 + \lambda_3 e^3 + \lambda_4 e^4 ,
\]
where $\lambda_1,\lambda_3,\lambda_4 \ge 0$ with $\lambda_1 + \lambda_3 + \lambda_4 = 1$.
In other words, the polyhedron $P \cap \RR^4_\ge$ is a polytope.

In applications such as computational strain design,
the set of EVs (the unique minimal set of {\em conformal} generators) is often more useful than a minimal set of generators.
In the example, the set of EVs includes $e^3$ which is a ccND vector, but not a vertex of $P$.
If we delete reaction~4 by gene knockout,
the new set of EVs consists of $e^3$ and $e^4$ (having zero flux through reaction 4),
and the resulting flux polyhedron is the polytope generated by $e^3$ and $e^4$.
Most importantly, we obtain the result without recalculating the set of generators
(after deleting reaction~4).


\section{Discussion}

Metabolic pathway analysis aims to identify meaningful routes in a network,
in particular, to decompose fluxes into {\em minimal} metabolic pathways.
However, only a decomposition {\em without cancelations} is biochemically meaningful,
since a reversible reaction cannot have a flux in different directions at the same time.

In mathematical terms, one is interested in a {\em conformal} decomposition of the flux cone
and of general polyhedral cones and polyhedra.
In this work, we first study s-cones (like the flux cone)
arising from a linear subspace and nonnegativity conditions.
Then, we analyze general polyhedral cones and polyhedra
via corresponding higher-dimensional s-cones.
Without assuming previous know\-ledge of polyhedral geometry,
we provide an elementary proof of a conformal refinement of Minkowski's and Carath\'eodory's theorems
(Theorems~\ref{thm}, \ref{thm:polcon}, and~\ref{thm:pol}):
Every vector (of an s-cone, a general polyhedral cone, or a polyhedron)
is a conformal sum of {\em elementary vectors} (conformally non-decomposable vectors),
and there is an upper bound on the number of elementary vectors needed in a conformal decomposition
(in terms of the dimension of the cone or polyhedron).

As a natural next question, one may ask: what is a {\em minimal generating set} of a polyhedral cone
that allows a conformal decomposition of every vector?
Clearly, such a set must contain all conformally non-decomposable vectors.
Indeed, we show that the elementary vectors form a {\em unique} minimal set of conformal generators (Proposition~\ref{pro:min}).
In metabolic pathway analysis, the question is: what is a minimal generating set of the flux cone
that allows a biochemically meaningful decomposition of every flux mode?
In this case, the {\em elementary modes} form a unique minimal set of generators without cancelations. 
This property distinguishes elementary modes as a fundamental concept in metabolic pathway analysis
and may serve as a definition.

The correspondence of general polyhedral cones and polyhedra to higher-dimensional s-cones
has also important consequences for the {\em computation} of elementary vectors.
In particular, it allows to use efficient algorithms and software developed for elementary modes
(see e.g.~\cite{Zanghellini2013} and the references therein)
for computing elementary vectors of general polyhedral cones and polyhedra.

In {\em applications},
decompositions without cancelations were first used in the study of the conversion cone~\cite{UrbanczikWagner2005},
a general polyhedral cone obtained by flux cone projection~\cite{Marashi2012}.
The approach was extended to polyhedra
arising from the flux cone and inhomogeneous constraints,
in particular, to describe the solution set of linear optimization problems encountered in flux balance analysis~\cite{Urbanczik2007}.
In analogy to s-cones, these sets could be called s-polyhedra.
Recently, elementary vectors have been used to describe such polyhedra
in the study of growth-coupled product synthesis~\cite{KlamtMahadevan2015}.
Interestingly,
conformal decompositions of the flux cone itself appeared rather late.
In fact, they have been used to characterize optimal solutions of enzyme allocation problems
in {\em kinetic} metabolic networks~\cite{MuellerRegensburgerSteuer2014}.

Minkowski's and Carath\'eodory's theorems (and their conformal refinements) are fundamental results in polyhedral geometry
with important applications in metabolic pathway analysis.
In subsequent work, we plan to revisit other results from polyhedral geometry and oriented matroids (like Farkas' lemma)
and investigate their consequences for metabolic pathway analysis.


\subsubsection*{Ackowledgments}

SM was supported by the Austrian Science Fund (FWF), project P28406.
GR was supported by the FWF, project P27229.


\section*{Appendix}

We prove the main results for polyhedra, Lemma~\ref{lem:121121} and Theorem~\ref{thm:pol}.

\begin{proof}[Proof of Lemma~\ref{lem:121121}]
To prove the first equivalence,
we note that $\xAxz{} \in \tilde C$ is cND if and only if $\xAx{} \in C'$ is cND, where $C' = \{ \xAx{} \in \RR^{r+m} \mid Ax \ge 0 \}$,
and apply Lemma~\ref{lem:121}.

To prove the second equivalence,
we show the two implications separately: \\
($\Rightarrow$)
We assume that $x \in P$ is ccND
and first consider a conformal sum of the form
\[
\xAxo{} = \xAxo{1} + \xAxz{2}
\]
with $x^1 \in P$, nonzero $x^2 \in C^r$, and $\sign(x^1), \, \sign(x^2) \le \sign(x)$.
As a matter of fact,
we also have $x = \frac{1}{2} x^1 + \frac{1}{2} (x^1+2x^2)$ with $x^1, x^1+2x^2 \in P$ and $\sign(x^1), \, \sign(x^1+2x^2) \le \sign(x)$.
By the assumption, $x^1 = x^1+2x^2$, that is, $x^2=0$, and it remains to consider a conformal sum of the form
\[
\xAxo{} = \lambda \! \xAxo{1} + (1 - \lambda) \! \xAxo{2} \tag{+}
\]
with $x^1,x^2 \in P$, $\sign(x^1), \, \sign(x^2) \le \sign(x)$, and $0<\lambda<1$.
By the assumption, $x^1=x^2$,
and the first vector in the sum is a positive multiple of the second.
That is,
\[
\lambda \! \xAxo{1} = \mu \, (1 - \lambda) \! \xAxo{2} \tag{$\ast$}
\]
with $\mu>0$.
Hence, $\xAxo{} \in \tilde C$ is cND. \\
($\Leftarrow$)
We assume that $\xAxo{} \in \tilde C$ is cND and consider the convex-conformal sum
\[
x = \lambda x^1 + (1-\lambda) x^2
\]
with $x^1,x^2 \in P$, $\sign(x^1), \, \sign(x^2) \le \sign(x)$, and $0<\lambda<1$.
Hence, we also have the conformal sum (+).
By the assumption,
we have equation ($\ast$) which implies $x^1=x^2$.
Hence, $x \in P$ is ccND.
\end{proof}

\begin{proof}[Proof of Theorem~\ref{thm:pol}]
Let $A \in \RR^{m \times r}$ and $b \in \RR^m$.
Define the homogenization
\[
C^h = \{ \xxi \in \RR^{r+1} \mid \xi \ge 0 \text{ and } Ax - \xi b \ge 0 \} ,
\]
the subspace
\[
\tilde S = \{ \xAxx{} \in \RR^{r+1+m} \mid \xxi \in \spann(C^h) \}
\]
and the s-cone
\[
\tilde C = \{ \xAxx{} \in \RR^{r+1+m} \mid \xxi \in C^h \} .
\]
Let $x \in P$.
By Theorem~\ref{thm}, $\xAxo{} \in \tilde C$ is a conformal sum of EVs.
That is, there exist finite sets $\tilde E_0, \tilde E_1 \subseteq \tilde C$ of (normalized) EVs such that
\[
\xAxo{} = \sum_{\eAez{} \in \tilde E_0} \eAez{} + \sum_{\eAeo{} \in \tilde E_1} \lambda_e \eAeo{}
\]
with
\[
\sign \! \eAez{} \!, \, \sign \! \eAeo{} \le \sign \! \xAxo{} ,
\]
$\lambda_e \ge 0$, and $\sum_{e \in E_1} \lambda_e = 1$.
By Lemma~\ref{lem:121121},
the EVs of $P$ are in one-to-one correspondence with the EVs of $\tilde C$.
Hence, there exist finite sets $E_0 = \{ e \mid \eAez{} \in \tilde E_0 \} \subseteq C^r$
and $E_1 = \{ e \mid \eAeo{} \in \tilde E_1 \} \subseteq P$
of EVs such that
\[
x = \sum_{e \in E_0} e + \sum_{e \in E_1} \lambda_e e \quad \text{with } \sign(e) \le \sign(x) .
\]
The set $\tilde E = \tilde E_0 \cup \tilde E_1$ (and hence $E = E_0 \cup E_1$) can be chosen
such that $|E| = |\tilde E| \le \dim(\tilde S) = \dim(P)+1$
and $|E| = |\tilde E| \le |\supp \! \xAxo{}| = |\supp(x)|+1+|\supp(Ax-b)|$.
\end{proof}


\bibliographystyle{plain}
\bibliography{flux_opt}

\end{document}